\numberwithin{equation}{section}
\newtheorem{theorem}[equation]{Theorem}
\newtheorem*{theorem*}{Theorem} \newtheorem{lemma}[equation]{Lemma}
\newtheorem*{conjecture*}{Mamma Conjecture}
\newtheorem*{conjecture1*}{Mamma Conjecture (revisited)}
\newtheorem{proposition}[equation]{Proposition}
\newtheorem{corollary}[equation]{Corollary}
\newtheorem*{corollary*}{Corollary}
\theoremstyle{remark}
\newtheorem{example}[equation]{Example}
\theoremstyle{remark}
\newtheorem{remark}[equation]{Remark}
\newcommand{\cA}{{\mathcal A}}
\newcommand{\cB}{{\mathcal B}}
\newcommand{\cC}{{\mathcal C}}
\newcommand{\cD}{{\mathcal D}}
\newcommand{\cF}{{\mathcal F}}
\newcommand{\cL}{{\mathcal L}}
\newcommand{\cN}{{\mathcal N}}
\newcommand{\cO}{{\mathcal O}}
\newcommand{\cQ}{{\mathcal Q}}
\newcommand{\cU}{{\mathcal U}}
\newcommand{\cW}{{\mathcal W}}
\newcommand{\cX}{{\mathcal X}}
\newcommand{\cY}{{\mathcal Y}}
\newcommand{\cZ}{{\mathcal Z}}
\newcommand{\bbA}{\mathbb{A}}
\newcommand{\bbF}{\mathbb{F}}
\newcommand{\bbP}{\mathbb{P}}
\newcommand{\bbQ}{\mathbb{Q}}
\newcommand{\bbZ}{\mathbb{Z}}
\DeclareMathOperator{\id}{id}
\DeclareMathOperator{\NChow}{NChow} 
\DeclareMathOperator{\NHom}{NHom} 
\DeclareMathOperator{\NNum}{NNum} 
\DeclareMathOperator{\Num}{Num} 
\newcommand{\dgcat}{\mathrm{dgcat}} 
\newcommand{\perf}{\mathrm{perf}}
\newcommand{\Chow}{\mathrm{Chow}}
\newcommand{\dg}{\mathrm{dg}}
\newcommand{\Hom}{\mathrm{Hom}}
\newcommand{\op}{\mathrm{op}}
\newcommand{\too}{\longrightarrow}
\newcommand{\ie}{\textsl{i.e.}\ }
\let\oldmarginpar\marginpar
\def\marginpar#1{\oldmarginpar{\tiny #1}}
\begin{document}

\title[On Grothendieck's standard conjectures in positive characteristic]{On Grothendieck's standard conjectures of type $\mathrm{C}^+$ and $\mathrm{D}$ in positive characteristic}
\author{Gon{\c c}alo~Tabuada}

\address{Gon{\c c}alo Tabuada, Department of Mathematics, MIT, Cambridge, MA 02139, USA}
\email{tabuada@math.mit.edu}
\urladdr{http://math.mit.edu/~tabuada}
\thanks{The author was partially supported by a NSF CAREER Award.}

\keywords{Grothendieck's standard conjectures, motives, determinantal varieties, stacks and orbifolds, zeta function, topological periodic cyclic homology, dg category, noncommutative motives, noncommutative algebraic geometry}
\date{\today}

\abstract{Making use of topological periodic cyclic homology, we extend Grothendieck's standard conjectures of type $\mathrm{C}^+$ and $\mathrm{D}$ (with respect to crystalline cohomology theory) from smooth projective schemes to smooth proper dg categories in the sense of Kontsevich. As a first application, we prove Grothendieck's original conjectures in the new cases of linear sections of determinantal varieties. As a second application, we prove Grothendieck's (generalized) conjectures in the new cases of ``low-dimensional'' orbifolds. Finally, as a third application, we establish a far-reaching noncommutative generalization of Berthelot's cohomological interpretation of the classical zeta function and of Grothendieck's conditional approach to ``half'' of the Riemann hypothesis. Along the way, following Scholze, we prove that the topological periodic cyclic homology of a smooth proper scheme $X$ agrees with the crystalline cohomology theory of $X$ (after inverting the characteristic of the base field).}}

\maketitle
\vskip-\baselineskip
\vskip-\baselineskip


\section{Introduction}
Let $k$ be a perfect base field of positive characteristic $p>0$, $W(k)$ the associated ring of $p$-typical Witt vectors, and $K:=W(k)[1/p]$ the fraction field of $W(k)$. Given a smooth projective $k$-scheme $X$, let $H^\ast_{\mathrm{crys}}(X):=H^\ast_{\mathrm{crys}}(X/W(k))\otimes_{W(k)} K$ be the crystalline cohomology of $X$, $\pi_X^i$ the $i^{\mathrm{th}}$ K\"unneth projector of $H^\ast_{\mathrm{crys}}(X)$, $Z^\ast(X)_\bbQ$ the $\bbQ$-vector space of algebraic cycles on $X$, and $Z^\ast(X)_\bbQ/_{\!\sim \mathrm{hom}}$ and $Z^\ast(X)_\bbQ/_{\!\sim \mathrm{num}}$ the quotients of $Z^\ast(X)_\bbQ$ with respect to the homological and numerical equivalence relations, respectively. Following Grothendieck \cite{Grothendieck} (see also Kleiman \cite{Kleim1, Kleim}), the standard conjecture of type $\mathrm{C}^+$, denoted by $\mathrm{C}^+(X)$, asserts that the even K\"unneth projector $\pi^+_X:=\sum_i \pi^{2i}_X$ is algebraic, and the standard conjecture of type $\mathrm{D}$, denoted by $\mathrm{D}(X)$, asserts that $Z^\ast(X)_\bbQ/_{\!\sim \mathrm{hom}}=Z^\ast(X)_\bbQ/_{\!\sim \mathrm{num}}$. Both these conjectures hold whenever $\mathrm{dim}(X)\leq 2$. Moreover, the standard conjecture of type $\mathrm{C}^+$ holds for abelian varieties (see Kleiman \cite[2. Appendix]{Kleim}) and also whenever the base field $k$ is finite (see Katz-Messing \cite{KM}). In addition to these cases (and to some other scattered cases), the aforementioned important conjectures remain wide open.

A {\em differential graded (=dg) category} $\cA$ is a category enriched over complexes of $k$-vector spaces; see \S\ref{sub:dg}. Every (dg) $k$-algebra $A$ gives naturally rise to a dg category with a single object. Another source of examples is provided by schemes (or, more generally, by algebraic stacks) since the category of perfect complexes $\perf(X)$ of every quasi-compact quasi-separated $k$-scheme $X$ (or algebraic stack $\cX$) admits a canonical dg enhancement $\perf_\dg(X)$. When $X$ is quasi-projective this dg enhancement is moreover unique; see Lunts-Orlov \cite[Thm. 2.12]{LO}. 

As explained in \S\ref{sec:NC-conj} below, given a smooth proper dg category $\cA$ in the sense of Kontsevich, Grothendieck's standard conjectures of type $\mathrm{C}^+$ and $\mathrm{D}$ admit noncommutative analogues $\mathrm{C}^+_{\mathrm{nc}}(\cA)$ and $\mathrm{D}_{\mathrm{nc}}(\cA)$, respectively.
\begin{theorem}\label{thm:main}
Given a smooth projective $k$-scheme $X$, we have the equivalences of conjectures $\mathrm{C}^+(X)\Leftrightarrow \mathrm{C}^+_{\mathrm{nc}}(\perf_\dg(X))$ and $\mathrm{D}(X)\Leftrightarrow \mathrm{D}_{\mathrm{nc}}(\perf_\dg(X))$.
\end{theorem}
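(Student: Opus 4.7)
The plan is to transport Grothendieck's classical conjectures across a $\mathbb{Z}/2$-graded comparison between crystalline cohomology and topological periodic cyclic homology, combined with a Chern character/cycle class compatibility. The overall template is a positive-characteristic analogue of what works for the conjectures formulated against de Rham or $\ell$-adic cohomology: replace the classical cohomology by $\mathrm{TP}(-)[1/p]$ on the noncommutative side, replace cycles by classes in $K_0$ of the relevant dg category, and show the translation is lossless when one aggregates even/odd Künneth components into a single $\mathbb{Z}/2$-graded piece.

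First, I would invoke the Scholze-type identification promised in the abstract, which supplies a functorial isomorphism
\[
\mathrm{TP}\bigl(\perf_\dg(X)\bigr)[1/p] \ \simeq\ H^{\mathrm{ev}}_{\mathrm{crys}}(X)\,\oplus\, H^{\mathrm{odd}}_{\mathrm{crys}}(X)[1]
\]
of $\mathbb{Z}/2$-graded $K$-modules, and extend it to $X\times X$ so that the induced isomorphism intertwines composition of cohomological correspondences with composition of noncommutative correspondences in $\mathrm{TP}$. Next, I would combine this with the Grothendieck--Riemann--Roch type decomposition
\[
K_0\bigl(\perf_\dg(X\times X)\bigr)_{\bbQ}\ \simeq\ \bigoplus_i \mathrm{CH}^i(X\times X)_{\bbQ}
\]
to check that the noncommutative Chern character $K_0(\perf_\dg(X\times X))_{\bbQ}\to \mathrm{TP}_0$ corresponds, under the identification above, to the direct sum of the crystalline cycle class maps $\mathrm{CH}^i(X\times X)_{\bbQ}\to H^{2i}_{\mathrm{crys}}(X\times X)$ landing in the even part.

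For the $\mathrm{C}^+$-part of the theorem, observe that under the identification the noncommutative even Künneth projector of $\perf_\dg(X)$, i.e.\ the idempotent in $\mathrm{TP}_0$ projecting onto the even summand, pulls back exactly to $\pi^+_X=\sum_i\pi^{2i}_X$. Thus $\pi^+_X$ is the crystalline class of some cycle on $X\times X$ if and only if $\pi^+_{\mathrm{nc}}(\perf_\dg(X))$ lies in the image of the Chern character, which is precisely $\mathrm{C}^+_{\mathrm{nc}}(\perf_\dg(X))$. For the $\mathrm{D}$-part, I would reduce both equivalence relations (homological and numerical) to the noncommutative ones via the same dictionary: the cup-product pairing on $H^\ast_{\mathrm{crys}}(X\times X)$ translates, under the $\mathbb{Z}/2$-folding, to the Euler/Mukai bilinear form on $K_0(\perf_\dg(X))$ used to define $\mathrm{D}_{\mathrm{nc}}$; numerical equivalence, being defined purely through intersection numbers/Euler characteristics, is preserved verbatim.

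The main obstacle I expect is the \emph{functoriality and multiplicativity} of the comparison in Step~1: to carry across the standard conjectures one needs the Scholze-type isomorphism not merely as an abstract $K$-linear identification, but as one compatible with pullback, pushforward along smooth projections, the Künneth formula, and composition of correspondences — which in turn is what forces the cup product on crystalline cohomology to match the composition law on $\mathrm{TP}$ of correspondence bimodules. Once that structural compatibility is in place, the rest is a formal transport of the four ingredients (cycles, cycle class map, Künneth decomposition, bilinear pairing) from $(X,H^\ast_{\mathrm{crys}})$ to $(\perf_\dg(X),\mathrm{TP}[1/p])$, and the two equivalences of conjectures fall out.
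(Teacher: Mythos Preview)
Your plan is correct in outline and is essentially the same strategy the paper uses, but packaged differently. Where you invoke Grothendieck--Riemann--Roch to identify $K_0(X\times X)_\bbQ\simeq\bigoplus_i\mathrm{CH}^i(X\times X)_\bbQ$ and then compare cycle-class maps into $\bigoplus_i H^{2i}_{\mathrm{crys}}$, the paper instead works categorically: it uses the fully faithful symmetric monoidal bridge $\Phi\colon\Chow(k)_\bbQ/_{\!-\otimes\bbQ(1)}\hookrightarrow\NChow(k)_\bbQ$ together with Theorem~\ref{thm:TP} to identify the composite $TP_\pm(-)_{1/p}\circ\Phi\circ\iota$ with the $\bbZ/2$-folding of $H^\ast_{\mathrm{crys}}$ as \emph{functors} on $\Chow(k)_\bbQ$. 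This is exactly the same content---morphisms in the orbit category are $\bigoplus_n\Hom_{\Chow}(\mathfrak{h}(X),\mathfrak{h}(X)(n))=\bigoplus_n\mathrm{CH}^{d+n}(X\times X)_\bbQ$, and $\Phi$ being fully faithful is your GRR isomorphism---but the categorical packaging dissolves the obstacle you flag: once the Scholze isomorphism is a natural isomorphism of symmetric monoidal functors on Chow motives, compatibility with pullback, pushforward, K\"unneth, and composition of correspondences is automatic. For the delicate direction $\mathrm{C}^+_{\mathrm{nc}}\Rightarrow\mathrm{C}^+$, the paper's degree argument (a lift in the orbit category is a family $\{\underline\pi_n^+\}_{n\in\bbZ}$, and the $n\neq 0$ pieces must vanish because $(\id,0)$ preserves the finer $\bbZ$-grading on $H^\ast_{\mathrm{crys}}$) is the categorical incarnation of the step you leave implicit in ``decompose along GRR and pick out the correct graded piece.''

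For type $\mathrm{D}$ your sketch is thinner than the paper's argument and would need more care. The paper does not compare pairings directly; instead it builds a commutative square relating $Z^\ast(X)_\bbQ/_{\!\sim\hm}$, $Z^\ast(X)_\bbQ/_{\!\sim\num}$, $K_0(\perf_\dg(X))_\bbQ/_{\!\sim\hm}$, and $K_0(\perf_\dg(X))_\bbQ/_{\!\sim\num}$ via induced functors $\Phi_H$ and $\Phi_N$ on homological and numerical motives. The numerical compatibility (bottom edge an isomorphism) comes for free from $\Phi$ being symmetric monoidal and $\cN$ being the maximal $\otimes$-ideal; the homological compatibility requires an extra lemma (the paper's Lemma~\ref{lem:aux}) that $\Phi_H$ is \emph{full}, and the reverse implication $\mathrm{D}_{\mathrm{nc}}\Rightarrow\mathrm{D}$ uses that each $Z^i(X)_\bbQ/_{\!\sim\hm}\to K_0/_{\!\sim\hm}$ is \emph{injective}, coming from faithfulness of $\Phi_H\circ\iota$. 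Your one-line ``preserved verbatim'' hides these two ingredients; they are not hard, but they are where the work is.
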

Intuitively speaking, Theorem \ref{thm:main} shows that Grothendieck's standard conjectures of type $\mathrm{C}^+$ and $\mathrm{D}$ belong not only to the realm of algebraic geometry but also to the broad setting of smooth proper dg categories. In what follows, we describe two of the manyfold applications\footnote{For example, Theorem \ref{thm:main} implies immediately that if two smooth projective $k$-schemes $X$ and $Y$ have (Fourier-Mukai) equivalent derived categories, then $\mathrm{C}^+(X)\Leftrightarrow \mathrm{C}^+(Y)$ and $\mathrm{D}(X) \Leftrightarrow \mathrm{D}(Y)$.} of this noncommutative viewpoint; consult also \S\ref{sec:zeta} below for a third application of this noncommutative viewpoint.
\section{Application I: HPD-invariance}
For a survey on Homological Projective Duality (=HPD), we invite the reader to consult Kuznetsov \cite{Kuznetsov-ICM} and/or Thomas \cite{Thomas}. Let $X$ be a smooth projective $k$-scheme equipped with a line bundle $\cL_X(1)$; we write $X \to \bbP(V)$ for the associated morphism, where $V:=H^0(X,\cL_X(1))^\ast$. Assume that the triangulated category $\perf(X)$ admits a Lefschetz decomposition $\langle \bbA_0, \bbA_1(1), \ldots, \bbA_{i-1}(i-1)\rangle$ with respect to $\cL_X(1)$ in the sense of \cite[Def.~4.1]{Kuznetsov-IHES}. Following \cite[Def.~6.1]{Kuznetsov-IHES}, let $Y$ be the HP-dual of $X$, $\cL_Y(1)$ the HP-dual line bundle, and $Y\to \bbP(V^\ast)$ the morphism associated to $\cL_Y(1)$. Given a linear subspace $L\subset V^\ast$, consider the linear sections $X_L:=X\times_{\bbP(V)}\bbP(L^\perp)$ and $Y_L:=Y \times_{\bbP(V^\ast)}\bbP(L)$.
\begin{theorem}[HPD-invariance]\label{thm:HPD}
Let $X$ and $Y$ be as above. Assume that $X_L$ and $Y_L$ are smooth, that $\mathrm{dim}(X_L)=\mathrm{dim}(X)-\mathrm{dim}(L)$ and $\mathrm{dim}(Y_L)=\mathrm{dim}(Y)-\mathrm{dim}(L^\perp)$, and that the conjecture $\mathrm{C}^+_{\mathrm{nc}}(\bbA_0^\dg)$, resp.  $\mathrm{D}_{\mathrm{nc}}(\bbA_0^\dg)$, holds, where $\bbA^{\mathrm{dg}}_0$ stands for the dg enhancement of $\bbA_0$ induced from $\perf_\dg(X)$. Under these assumptions, we have the equivalence $\mathrm{C}^+(X_L) \Leftrightarrow \mathrm{C}^+(Y_L)$, resp. $\mathrm{D}(X_L) \Leftrightarrow \mathrm{D}(Y_L)$.
\end{theorem}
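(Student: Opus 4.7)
The plan is to combine Theorem \ref{thm:main} with Kuznetsov's main HPD theorem and the additivity of the noncommutative conjectures across semi-orthogonal decompositions.

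By Theorem \ref{thm:main} applied to $X_L$ and $Y_L$, the commutative equivalences to be established reduce to the noncommutative statements $\mathrm{C}^+_{\mathrm{nc}}(\perf_\dg(X_L)) \Leftrightarrow \mathrm{C}^+_{\mathrm{nc}}(\perf_\dg(Y_L))$ and $\mathrm{D}_{\mathrm{nc}}(\perf_\dg(X_L)) \Leftrightarrow \mathrm{D}_{\mathrm{nc}}(\perf_\dg(Y_L))$. This shifts the entire argument into the setting of smooth proper dg categories, which is the natural habitat of HPD.

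Under the dimensional hypotheses on $X_L$ and $Y_L$, Kuznetsov's main HPD theorem furnishes semi-orthogonal decompositions of $\perf_\dg(X_L)$ and $\perf_\dg(Y_L)$ that share a common \emph{nonlinear} component $\cC_L^\dg$, the remaining pieces being (twists of) the primitive Lefschetz blocks $\bbA_m^\dg$ for $m \geq \mathrm{dim}(L)$ on the $X_L$-side and dual Lefschetz blocks $\bbB_n^\dg$ for $n \geq \mathrm{dim}(L^\perp)$ on the $Y_L$-side. The Lefschetz decomposition of $X$ embeds each $\bbA_m^\dg$ as an admissible subcategory of $\bbA_0^\dg$; the HPD correspondence likewise realizes each $\bbB_n^\dg$ as a subcategory controlled by $\bbA_0^\dg$. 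Consequently, the hypothesis $\mathrm{C}^+_{\mathrm{nc}}(\bbA_0^\dg)$ (resp.\ $\mathrm{D}_{\mathrm{nc}}(\bbA_0^\dg)$) propagates, via additivity, to every Lefschetz piece appearing on either side.

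The critical input is that the noncommutative conjectures are \emph{additive} across semi-orthogonal decompositions: for $\cA = \langle \cA_1, \ldots, \cA_r \rangle$ a smooth proper dg category, $\mathrm{C}^+_{\mathrm{nc}}(\cA)$ (resp.\ $\mathrm{D}_{\mathrm{nc}}(\cA)$) holds if and only if it holds for each $\cA_s$. This follows formally from the additivity of topological periodic cyclic homology $\mathrm{HP}^\ast$ (so that the noncommutative K\"unneth projectors decompose componentwise) and from the additivity of the rational/numerical Grothendieck groups (so that $\sim_{\mathrm{hom}}$ and $\sim_{\mathrm{num}}$ are respected componentwise). Combined with the previous step, both $\mathrm{C}^+_{\mathrm{nc}}(\perf_\dg(X_L))$ and $\mathrm{C}^+_{\mathrm{nc}}(\perf_\dg(Y_L))$ collapse to the single statement $\mathrm{C}^+_{\mathrm{nc}}(\cC_L^\dg)$, proving the desired equivalence; the same argument yields the $\mathrm{D}_{\mathrm{nc}}$-equivalence.

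The main technical obstacle is the additivity step. While essentially formal, it requires tracing through the definitions of $\mathrm{C}^+_{\mathrm{nc}}$ and $\mathrm{D}_{\mathrm{nc}}$ introduced in \S\ref{sec:NC-conj} and verifying that the Chern-character-type map from algebraic cycles to $\mathrm{HP}^\ast$ (which defines $\sim_{\mathrm{hom}}$) together with the numerical pairing both respect semi-orthogonal decompositions. The secondary point of identifying the dual Lefschetz pieces $\bbB_n^\dg$ as subcategories controllable by $\bbA_0^\dg$ is inherent to Kuznetsov's HPD framework and should only require a precise citation.
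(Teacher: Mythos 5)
Your proposal is correct and follows essentially the same route as the paper, which simply defers to the proof of \cite[Thm.~1.4]{CD}: reduce via Theorem \ref{thm:main} to the noncommutative conjectures, apply the HPD decompositions of $\perf(X_L)$ and $\perf(Y_L)$ with common component $\cC_L$, and use that $\mathrm{C}^+_{\mathrm{nc}}$ and $\mathrm{D}_{\mathrm{nc}}$ are additive for semi-orthogonal decompositions and stable under passage to admissible subcategories (with each $\bbA_m$ and each $\bbB_n$ controlled, via the primitive Lefschetz components, by $\bbA_0$). The only point to flag is that in positive characteristic one must cite the arbitrary-characteristic form of the HPD theorem \cite[Thm.~2.3.4]{Bernardara} rather than Kuznetsov's original characteristic-zero statement, exactly as the paper indicates.
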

\begin{remark}\label{rk:singular}
The linear section $X_L$ is smooth if and only if $Y_L$ is smooth; see \cite[page~9]{Kuznetsov-ICM}. Moreover, given any {\em general} linear subspace $L \subset V^\ast$, $X_L$ and $Y_L$ are smooth, and $\mathrm{dim}(X_L)=\mathrm{dim}(X)-\mathrm{dim}(L)$ and $\mathrm{dim}(Y_L)=\mathrm{dim}(Y)-\mathrm{dim}(L^\perp)$.
\end{remark}
Making use of Theorem \ref{thm:HPD}, we are now able to prove Grothendieck's standard conjectures of type $\mathrm{C}^+$ and $\mathrm{D}$ in new cases. Here is one family of examples:
\subsection*{Determinantal duality}
Let $U_1$ and $U_2$ be two $k$-vector spaces of dimensions $d_1$ and $d_2$, respectively, $V:=U_1 \otimes U_2$, and $0 < r< \mathrm{min}\{d_1,d_2\}$ an integer. 

Consider the determinantal variety $\cZ^r_{d_1, d_2}\subset \bbP(V)$ defined as the locus of those matrices $U_2 \to U_1^\ast$ with rank $\leq r$; recall that the condition (rank $\leq r$) can be described as the vanishing of the $(r+1)$-minors of the matrix of indeterminates:
$$ \begin{pmatrix}
x_{1,1} & \cdots & x_{1,d_2} \\
\vdots & \ddots & \vdots \\
x_{d_1, 1} & \cdots & x_{d_1, d_2}
\end{pmatrix}\,.
$$
\begin{example}[Segre varieties]\label{ex:Segre}
In the particular case where $r=1$, the determinantal varieties agree with the classical Segre varieties. Concretely, $\cZ_{d_1, d_2}^1$ agrees with the image of Segre homomorphism $\bbP(U_1) \times \bbP(U_2) \to \bbP(V)$ induced by the map $U_1 \times U_2 \to U_1\otimes U_2$. For example, $\cZ_{2,2}^1$ agrees with the classical quadric hypersurface 
$$\{[x_{1,2}:x_{1,2}: x_{2,1}: x_{2,2}]\,|\,\mathrm{det}\begin{pmatrix} x_{1,1} & x_{1,2} \\ x_{2,1} & x_{2,2} \end{pmatrix}=0 \}\subset \bbP^3\,.$$
\end{example}
In contrast with the Segre varieties, the determinantal varieties $\cZ^r_{d_1, d_2}, r\geq 2$, are not smooth. The singular locus of $\cZ^r_{d_1, d_2}$ consists of those matrices $U_2 \to U_1^\ast$ with rank $<r$, \ie it agrees with the closed subvariety $\cZ^{r-1}_{d_1, d_2}$. Nevertheless, it is well-known that $\cZ^r_{d_1, d_2}$ admits a canonical Springer resolution of singularities given by the (incidence) projective bundle $\cX^r_{d_1, d_2} := \bbP(U_2 \otimes \cQ) \to \cZ^r_{d_1, d_2}$, where $\cQ$ stands for the tautological quotient vector bundle of the Grassmannian $\mathrm{Gr}(r, U_1)$.
 
Dually, consider the variety $\cW^r_{d_1, d_2}\subset \bbP(V^\ast)$, defined as the locus of those matrices $U^\ast_2 \to U_1$ with corank $\geq r$, and the associated Springer resolution of singularities $\cY^r_{d_1, d_2}:=\bbP(U_2^\ast \otimes \cU^\ast) \to \cW^r_{d_1, d_2}$, where $\cU$ stands for the tautological subvector bundle of $\mathrm{Gr}(r,U_1)$. As explained in \cite[\S1]{Tate}, work of Bernardara-Bolognesi-Faenzi \cite{BBF} and Buchweitz-Leuschke-Van den Bergh \cite{BLV} implies that $X:=\cX^r_{d_1, d_2}$ and $Y:=\cY^r_{d_1, d_2}$ are HP-dual to each other with respect to a certain Lefschetz decomposition $\perf(X)=\langle \bbA_0, \bbA_1(1), \ldots, \bbA_{d_2 r -1}(d_2r -1)\rangle$. Moreover, the dg category $\bbA_0^{\mathrm{dg}}$ is Morita equivalent to a finite-dimensional $k$-algebra of finite global dimension $A$; consult the proof of \cite[Prop.~1.5]{Tate}. Thanks to Proposition \ref{prop:fin} below, this implies that the conjectures $\mathrm{C}^+_{\mathrm{nc}}(\bbA_0^{\mathrm{dg}})$ and $\mathrm{D}_{\mathrm{nc}}(\bbA_0^{\mathrm{dg}})$ hold. Consequently, by combining Theorem \ref{thm:HPD} with Remark \ref{rk:singular}, we obtain the following result: 
%
%
\begin{corollary}\label{cor:last}
Given any general linear subspace $L \subset V^\ast$, we have the equivalences of equivalences $\mathrm{C}^+(X_L) \Leftrightarrow \mathrm{C}^+(Y_L)$ and $\mathrm{D}(X_L) \Leftrightarrow \mathrm{D}(Y_L)$.
\end{corollary}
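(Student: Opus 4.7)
The plan is to verify the hypotheses of Theorem \ref{thm:HPD} for the HP-dual pair $(X,Y)=(\cX^r_{d_1,d_2},\cY^r_{d_1,d_2})$ equipped with the Lefschetz decomposition $\perf(X)=\langle \bbA_0,\bbA_1(1),\ldots,\bbA_{d_2r-1}(d_2r-1)\rangle$ recalled above, and then read off the two equivalences.

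The smoothness and dimension hypotheses are the easy part. Since $L\subset V^\ast$ is assumed to be a \emph{general} linear subspace, Remark \ref{rk:singular} tells us immediately that $X_L$ and $Y_L$ are both smooth and satisfy the expected dimension equalities $\mathrm{dim}(X_L)=\mathrm{dim}(X)-\mathrm{dim}(L)$ and $\mathrm{dim}(Y_L)=\mathrm{dim}(Y)-\mathrm{dim}(L^\perp)$.

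The substantive hypothesis is that $\mathrm{C}^+_{\mathrm{nc}}(\bbA_0^{\mathrm{dg}})$ and $\mathrm{D}_{\mathrm{nc}}(\bbA_0^{\mathrm{dg}})$ hold. Here the plan is to appeal to the work of Bernardara-Bolognesi-Faenzi \cite{BBF} and Buchweitz-Leuschke-Van den Bergh \cite{BLV} (as reproduced in the proof of \cite[Prop.~1.5]{Tate}) to identify $\bbA_0^{\mathrm{dg}}$, up to Morita equivalence, with a finite-dimensional $k$-algebra $A$ of finite global dimension. Since the noncommutative conjectures $\mathrm{C}^+_{\mathrm{nc}}$ and $\mathrm{D}_{\mathrm{nc}}$ developed in \S\ref{sec:NC-conj} are Morita invariant by construction, verifying them for $\bbA_0^{\mathrm{dg}}$ reduces to verifying them for $A$, which is exactly what Proposition \ref{prop:fin} provides.

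With these three inputs in hand (generic smoothness and dimension from Remark \ref{rk:singular}, the Morita identification from \cite{BBF,BLV,Tate}, and the noncommutative conjectures for $A$ from Proposition \ref{prop:fin}), Theorem \ref{thm:HPD} applies verbatim and delivers the two desired equivalences $\mathrm{C}^+(X_L)\Leftrightarrow \mathrm{C}^+(Y_L)$ and $\mathrm{D}(X_L)\Leftrightarrow \mathrm{D}(Y_L)$. The only conceptually nontrivial step is the Morita identification of $\bbA_0^{\mathrm{dg}}$ with a finite-dimensional algebra of finite global dimension; however, this is an input imported from the cited literature rather than something to be re-established here, so the argument is essentially a direct assembly of already-available pieces.
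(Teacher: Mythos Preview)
Your proposal is correct and follows essentially the same approach as the paper: the paragraph immediately preceding Corollary~\ref{cor:last} assembles exactly the same ingredients (the Morita identification of $\bbA_0^{\mathrm{dg}}$ with a finite-dimensional $k$-algebra of finite global dimension via \cite{BBF,BLV,Tate}, Proposition~\ref{prop:fin}, and Remark~\ref{rk:singular}) and then invokes Theorem~\ref{thm:HPD}.
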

By construction, $\mathrm{dim}(X)= r(d_1+ d_2 -r)-1$ and $\mathrm{dim}(Y)= r(d_1-d_2-r) + d_1 d_2 -1$. Consequently, the associated linear sections have the following dimensions:
$$
\mathrm{dim}(X_L) =  r(d_1+d_2-r) -1 - \mathrm{dim}(L) \quad 
\mathrm{dim}(Y_L) = r(d_1 - d_2 - r) -1 +\mathrm{dim}(L)\,. 
$$
Since Grothendieck's standard conjectures of type $\mathrm{C}^+$ and $\mathrm{D}$ hold in dimensions $\leq 2$, we hence obtain from Corollary \ref{cor:last} the following result(s):
\begin{theorem}[Linear sections of determinantal varieties]\label{thm:last}
Let $X_L$ and $Y_L$ be smooth linear sections of determinantal varieties as in Corollary \ref{cor:last}.
\begin{itemize}
\item[(i)] When $r(d_1 + d_2 -r) -1 - \mathrm{dim}(L)\leq 2$, the conjectures $\mathrm{C}^+(Y_L)$~and~$\mathrm{D}(Y_L)$~hold.
\item[(ii)] When $r(d_1 -d_2 -r) -1 + \mathrm{dim}(L)\leq 2$, the conjectures $\mathrm{C}^+(X_L)$~and~$\mathrm{D}(X_L)$~hold.
\end{itemize}
\end{theorem}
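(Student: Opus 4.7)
The plan is to derive Theorem \ref{thm:last} directly from Corollary \ref{cor:last} by matching the numerical hypotheses against the dimension formulas displayed immediately before the statement, together with the classical fact, recalled in the introduction, that Grothendieck's standard conjectures of type $\mathrm{C}^+$ and $\mathrm{D}$ hold for every smooth projective $k$-scheme of dimension $\leq 2$.

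For part (i), I would first rewrite the hypothesis $r(d_1+d_2-r)-1-\mathrm{dim}(L)\leq 2$ as $\mathrm{dim}(X_L)\leq 2$, using the displayed formula $\mathrm{dim}(X_L)=r(d_1+d_2-r)-1-\mathrm{dim}(L)$. Since $X_L$ is by assumption a smooth projective $k$-scheme of dimension at most $2$, the conjectures $\mathrm{C}^+(X_L)$ and $\mathrm{D}(X_L)$ then hold unconditionally. Finally, I would invoke the equivalences $\mathrm{C}^+(X_L)\Leftrightarrow \mathrm{C}^+(Y_L)$ and $\mathrm{D}(X_L)\Leftrightarrow \mathrm{D}(Y_L)$ furnished by Corollary \ref{cor:last} to transport these conjectures across and conclude $\mathrm{C}^+(Y_L)$ and $\mathrm{D}(Y_L)$. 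Part (ii) is symmetric: the inequality $r(d_1-d_2-r)-1+\mathrm{dim}(L)\leq 2$ rewrites, via the companion formula, as $\mathrm{dim}(Y_L)\leq 2$; the conjectures then hold unconditionally for the smooth projective $k$-scheme $Y_L$, and the same biconditionals transport them back to $X_L$.

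There is no genuine obstacle at this stage, since all the hard work sits upstream. The nontrivial content is packed into Theorem \ref{thm:HPD} (which rests on the dg-categorical reformulation provided by Theorem \ref{thm:main}) and into Corollary \ref{cor:last} (which additionally invokes Proposition \ref{prop:fin} to verify $\mathrm{C}^+_{\mathrm{nc}}(\bbA_0^{\dg})$ and $\mathrm{D}_{\mathrm{nc}}(\bbA_0^{\dg})$, via the finite-dimensional algebra $A$ of finite global dimension identified in \cite[Prop.~1.5]{Tate}, together with Remark \ref{rk:singular} to secure the general-position smoothness of $X_L$ and $Y_L$). Once those ingredients are in place, Theorem \ref{thm:last} is just a combinatorial bookkeeping exercise: it records precisely those configurations of $(r,d_1,d_2,\mathrm{dim}(L))$ in which one of the two HP-dual linear sections has dimension small enough to trigger the classical low-dimensional case of Grothendieck's conjectures, after which the HPD-invariance of Corollary \ref{cor:last} propagates the result to its partner.
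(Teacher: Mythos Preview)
Your proposal is correct and follows exactly the paper's own argument: the numerical hypotheses are rewritten via the displayed dimension formulas as $\mathrm{dim}(X_L)\leq 2$ (resp.\ $\mathrm{dim}(Y_L)\leq 2$), the low-dimensional cases of Grothendieck's conjectures are invoked, and Corollary~\ref{cor:last} transports the result to the HP-dual side. There is nothing to add.
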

\begin{corollary}[Square matrices]\label{cor:HPD}
Let $d_1=d_2$. Given any general linear subspace $L \subset V^\ast$ of dimension $r^2+i$, $1\leq i \leq3 $, the conjectures $\mathrm{C}^+(X_L)$ and $\mathrm{D}(X_L)$ hold.
\end{corollary}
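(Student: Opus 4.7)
The plan is to reduce Corollary \ref{cor:HPD} to Theorem \ref{thm:last}(ii) via a short dimension computation. Specifically, I want to show that the hypothesis ``$d_1=d_2$ and $\dim(L)=r^2+i$ with $1\leq i\leq 3$'' forces $\dim(Y_L)\leq 2$, at which point Theorem \ref{thm:last}(ii) delivers $\mathrm{C}^+(X_L)$ and $\mathrm{D}(X_L)$ directly.

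Concretely, I would first recall the dimension formula $\dim(Y_L)=r(d_1-d_2-r)-1+\dim(L)$ established just before the statement of Theorem \ref{thm:last}. Substituting $d_1=d_2$ into this formula collapses the first summand, yielding
\[
\dim(Y_L)=-r^2-1+\dim(L).
\]
Next, plugging in $\dim(L)=r^2+i$ gives $\dim(Y_L)=i-1$. For $i\in\{1,2,3\}$, this places $\dim(Y_L)\in\{0,1,2\}$, which is precisely the regime where Grothendieck's standard conjectures of type $\mathrm{C}^+$ and $\mathrm{D}$ are classically known to hold.

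Finally, I invoke Theorem \ref{thm:last}(ii): since the quantity $r(d_1-d_2-r)-1+\dim(L)$ is at most $2$, both $\mathrm{C}^+(X_L)$ and $\mathrm{D}(X_L)$ follow. Note that since we are assuming $L$ is general, Remark \ref{rk:singular} guarantees that $X_L$ (and $Y_L$) are smooth with the expected dimensions, so the hypotheses of Theorem \ref{thm:last} (inherited from Corollary \ref{cor:last}) are satisfied. There is no real obstacle here, the entire content of the corollary being the numerical observation that the square-matrix case $d_1=d_2$ makes the HP-dual $Y_L$ very low-dimensional for linear subspaces of dimension only slightly above $r^2$; the heavy lifting has already been done in Theorem \ref{thm:HPD} and the ensuing Corollary \ref{cor:last}.
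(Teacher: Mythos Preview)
Your proposal is correct and is exactly the intended argument: the paper states Corollary \ref{cor:HPD} as an immediate consequence of Theorem \ref{thm:last}(ii), and the content is precisely the one-line computation $r(d_1-d_2-r)-1+\dim(L)=-r^2-1+(r^2+i)=i-1\leq 2$ that you carry out.
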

To the best of the author's knowledge, Theorem \ref{thm:last} (and Corollary \ref{cor:HPD}) is new in the literature. In particular, it proves Grothendieck's standard conjecture of type $\mathrm{D}$ in several new cases; consult Remarks \ref{rk:typeB}-\ref{rk:zero} below. For example, note that in Corollary \ref{cor:HPD} the linear section $X_L$ is of dimension $r((2d-r)-r)-1 -i, 1\leq i \leq 3$, with $d:=d_1=d_2$. Therefore, by letting $d \to \infty$, we obtain infinitely many new examples of smooth projective $k$-schemes $X_L$, of arbitrary high dimension, satisfying Grothendieck's standard conjecture of type $\mathrm{D}$.
\begin{remark}[Standard conjecture of type $\mathrm{B}$]\label{rk:typeB}
Recall from \cite{Grothendieck} the definition of Grothendieck's standard conjecture of type $\mathrm{B}$ (a.k.a. the standard conjecture of Lefschetz type). This conjecture holds for the projective bundles $X$ and $Y$, is stable under hyperplane sections, and implies the standard conjecture of type $\mathrm{C}^+$; see \cite[\S4]{Kleim1}. Consequently, the standard conjecture of Lefschetz type yields an alternative ``geometric'' proof of Theorem \ref{thm:last} for the standard conjecture of~type~$\mathrm{C}^+$.
\end{remark}
\begin{remark}[Standard conjecture of type $\mathrm{I}$]\label{rk:zero}
Recall from \cite{Grothendieck} the definition of Grothendieck's standard conjecture of type $\mathrm{I}$ (a.k.a. the standard conjecture of Hodge type). As explained in {\em loc. cit.}, given any smooth projective $k$-scheme $X$, we have the implication $\mathrm{B}(X) + \mathrm{I}(X) \Rightarrow \mathrm{D}(X)$. On the one hand, when the base field $k$ is of characteristic zero\footnote{The characteristic zero analogue of Theorem \ref{thm:last} was proved in \cite[Thm.~1.11]{CD}.}, the conjecture $\mathrm{I}(X)$ holds (thanks to the Hodge index theorem). On the other hand, in positive characteristic, the conjecture $\mathrm{I}(X)$ is only known to hold when $\mathrm{dim}(X)\leq 3$. Consequently, in contrast with Remark \ref{rk:typeB}, the standard conjecture of Lefschetz type does {\em not} yields an alternative ``geometric'' proof of Theorem \ref{thm:last} for the standard conjecture of type $\mathrm{D}$.
\end{remark}
Finally, note that whenever $\bbP(L^\perp) \subset \bbP(V)$ does not intersects the singular locus of $\cZ^r_{d_1, d_2}$, we have $X_L=\bbP(L^\perp) \cap \cZ^r_{d_1, d_2}$. In other words, $X_L$ is a linear section of a determinantal variety. Here are some examples:
\begin{example}[Segre varieties]\label{ex:Segre1}
Let $r=1$. In this case, as mentioned in Example \ref{ex:Segre}, the determinantal variety $\cZ^1_{d_1, d_2}\subset \bbP^{d_1 d_2 -1}$ agrees with the smooth Segre variety of  dimension $d_1 + d_2 -2$. Therefore, thanks to Theorem \ref{thm:last}(ii), given any general linear subspace $L \subset V^\ast$ of dimension $(d_2 -d_1) +i, 2 \leq i \leq 4$, the associated smooth linear section $X_L\subset \cZ^1_{d_1, d_2}$ has dimension $2d_1 -2 - i, 2 \leq i \leq 4$, and satisfies Grothendieck's standard conjectures of type $\mathrm{C}^+$ and $\mathrm{D}$. 
\end{example}
\begin{example}[Rational normal scrolls]
Let $r=1$ and $d_2=2$. In this case, the Segre variety $\cZ^2_{d_1, 2} \subset \bbP^{2d_1-1}$ agrees with the rational normal $d_1$-fold scroll $S_{1, \ldots, 1}$; see \cite[Ex.~8.27]{Harris}. Take $d_1=4$, resp. $d_1=5$, and choose a linear subspace $L \subset V^\ast$ of dimension $1$ for which the associated hyperplane $\bbP(L^\perp)\subset \bbP^7$, resp. $\bbP(L^\perp) \subset \bbP^9$, does not contains any $3$-plane, resp. $4$-plane, of the rulling of $S_{1,1,1,1}$, resp. $S_{1,1,1,1,1}$. Note that this is a general condition on $L$. By combining Example \ref{ex:Segre1} with \cite[Prop.~2.5]{Faenzi}, we hence conclude that the rational normal $3$-fold scroll $X_L=S_{1,1,2}$, resp. $4$-fold scroll $X_L=S_{1,1,1,2}$, satisfies Grothendieck's standard conjectures of type $\mathrm{C}^+$ and $\mathrm{D}$.
\end{example}
\begin{example}[Square matrices]
Let $d_1=d_2=4$ and $r=2$. In this case, the determinantal variety $\cZ^2_{4,4}\subset \bbP^{15}$ has dimension $11$ and its singular locus is the $6$-dimensional Segre variety $\cZ^1_{4,4}$. Given any general linear subspace $L\subset V^\ast$ of dimension $7$, the associated smooth linear section $X_L$ is $4$-dimensional and, thanks to Corollary \ref{cor:HPD}, it satisfies Grothendieck's standard conjectures of type $\mathrm{C}^+$ and $\mathrm{D}$. Note that since $\mathrm{codim}(L^\perp)=7>6 = \mathrm{dim}(\cZ^1_{4,4})$, the subspace $\bbP(L^\perp)\subset \bbP^{15}$ does {\em not} intersects the singular locus $\cZ^1_{4,4}$ of $\cZ^2_{4,4}$. Therefore, in all these cases, the $4$-fold $X_L$ is a linear section of the determinantal variety $\cZ^2_{4,4}$.
\end{example}

\section{Application II: Grothendieck's standard conjectures for orbifolds}
Theorem \ref{thm:main} allows us to easily extend Grothendieck's standard conjectures of type $\mathrm{C}^+$ and $\mathrm{D}$ from smooth projective $k$-schemes $X$ to smooth proper algebraic $k$-stacks $\cX$ by setting $\mathrm{C}^+(\cX):=\mathrm{C}^+_{\mathrm{nc}}(\perf_\dg(\cX))$ and $\mathrm{D}(\cX):=\mathrm{D}_{\mathrm{nc}}(\perf_\dg(\cX))$.
\begin{theorem}[Orbifolds]\label{thm:orbifold}
Let $G$ be a finite group of order $n$, $X$ a smooth projective $k$-scheme equipped with a $G$-action, and $\cX:=[X/G]$ the associated global orbifold. If $p \nmid n$, then we have the following implications of conjectures
\begin{eqnarray*}
\sum_{\sigma \subseteq G} \mathrm{C}^+(X^\sigma) \Rightarrow  \mathrm{C}^+(\cX) && \sum_{\sigma \subseteq G} \mathrm{D}(X^\sigma \times \mathrm{Spec}(k[\sigma])) \Rightarrow  \mathrm{D}(\cX)\,,
\end{eqnarray*}
where $\sigma$ is a cyclic subgroup of $G$. Moreover, whenever $k$ contains the $n^{\mathrm{th}}$ roots of unity, the conjecture $\mathrm{D}(X^\sigma \times \mathrm{Spec}(k[\sigma]))$ can be replaced by $\mathrm{D}(X^\sigma)$.
\end{theorem}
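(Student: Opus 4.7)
The plan is to combine Theorem \ref{thm:main} with a noncommutative motivic decomposition of $\perf_\dg(\cX)$ in terms of the fixed loci $X^\sigma$. By definition, $\mathrm{C}^+(\cX):=\mathrm{C}^+_{\mathrm{nc}}(\perf_\dg(\cX))$ and $\mathrm{D}(\cX):=\mathrm{D}_{\mathrm{nc}}(\perf_\dg(\cX))$, so both conjectures depend only on the class of $\perf_\dg(\cX)$ in a suitable category of noncommutative motives, and are preserved under direct sums and under direct summands. Hence it suffices to exhibit $\perf_\dg(\cX)$ as a summand of a noncommutative motive built from the $X^\sigma$'s, and then to verify the noncommutative conjectures on each piece.

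The key input is a ``tame'' orbifold decomposition, available under $p\nmid n$, of the shape
$$ U\bigl(\perf_\dg(\cX)\bigr) \text{ is a direct summand of } \bigoplus_{\sigma \subseteq G} U\bigl(\perf_\dg(X^\sigma \times \Spec(k[\sigma]))\bigr),$$
with $\sigma$ running over the cyclic subgroups of $G$. The tameness hypothesis guarantees that $k[\sigma]$ is a finite étale separable $k$-algebra, which is what makes the decomposition possible; I would obtain it by equivariant étale descent along $X \to \cX$, a Mackey-style formula expressing $G$-equivariant perfect complexes in terms of fixed points and centralizers, and the semi-orthogonal decomposition of $\perf_\dg([X^\sigma/C_G(\sigma)])$ along the characters of the cyclic subgroup $\sigma$ (where the $\Spec(k[\sigma])$ factor records precisely the isotypic pieces of the $\sigma$-action). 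Granting such a decomposition, Theorem \ref{thm:main} applied to each smooth projective $k$-scheme $X^\sigma \times \Spec(k[\sigma])$ converts the noncommutative conjectures back into their commutative counterparts, so the assumed conjectures $\mathrm{C}^+(X^\sigma)$ and $\mathrm{D}(X^\sigma \times \Spec(k[\sigma]))$ propagate through additivity and passage to summands to give $\mathrm{C}^+(\cX)$ and $\mathrm{D}(\cX)$. For $\mathrm{C}^+$, one eliminates the $\Spec(k[\sigma])$ factor: since $p\nmid|\sigma|$, $\Spec(k[\sigma])$ is zero-dimensional finite étale, its crystalline cohomology is concentrated in degree zero, and the Künneth formula identifies the even Künneth projector of $X^\sigma \times \Spec(k[\sigma])$ with that of $X^\sigma$ tensored with the identity; therefore $\mathrm{C}^+(X^\sigma) \Leftrightarrow \mathrm{C}^+(X^\sigma \times \Spec(k[\sigma]))$. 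When $k$ contains the $n^{\mathrm{th}}$ roots of unity, the cyclic group algebra $k[\sigma]$ splits as a product of copies of $k$, so $X^\sigma \times \Spec(k[\sigma])$ is a disjoint union of copies of $X^\sigma$ and $\mathrm{D}(X^\sigma \times \Spec(k[\sigma]))$ reduces to $\mathrm{D}(X^\sigma)$.

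The principal obstacle is establishing the motivic decomposition compatibly with the topological periodic cyclic / crystalline framework of this paper. Analogous decompositions are well-documented in characteristic zero and in $\ell$-adic realizations once $n$ is invertible, but here the relevant realization is topological periodic cyclic homology with target crystalline cohomology (after inverting $p$). Since $p\nmid n$, the integer $n$ is automatically invertible in $\bbQ$, and the standard equivariant descent combined with the comparison between topological periodic cyclic homology and crystalline cohomology already established earlier in the paper should deliver the required decomposition; the remaining arguments (additivity of invariants, passage to summands, Künneth, and the splitting of $k[\sigma]$) are then formal.
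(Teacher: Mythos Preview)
Your proposal is correct and follows essentially the same strategy as the paper: exhibit $U(\perf_\dg(\cX))_\bbQ$ as a direct summand of $\bigoplus_{\sigma}U(\perf_\dg(X^\sigma\times\Spec(k[\sigma])))_\bbQ$, use stability of the noncommutative conjectures under sums and summands, and invoke Theorem~\ref{thm:main} to translate back. Two small differences are worth noting. First, the paper does not derive the motivic decomposition but simply cites it from \cite[Thm.~1.1 and Rk.~1.4]{Orbifold}; your Mackey-style sketch is reasonable but not needed. Second, for the roots-of-unity refinement the paper invokes a sharper decomposition \cite[Cor.~1.6(i)]{Orbifold} with summands $U(\perf_\dg(X^\sigma))_\bbQ$ directly, whereas you argue more elementarily that $k[\sigma]\simeq k^{|\sigma|}$ splits $X^\sigma\times\Spec(k[\sigma])$ into copies of $X^\sigma$; both routes work. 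Finally, your closing worry about compatibility with the $TP$/crystalline realization is misplaced: the decomposition lives in $\NChow(k)_\bbQ$, and since $\mathrm{C}^+_{\mathrm{nc}}$ and $\mathrm{D}_{\mathrm{nc}}$ are formulated entirely in terms of that category and the functor $TP_\pm(-)_{1/p}$ out of it, any direct-summand relation there is automatically respected by the realization.
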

Theorem \ref{thm:orbifold} leads automatically to a proof of Grothendieck's standard conjectures of type $\mathrm{C}^+$ and $\mathrm{D}$ for (global) orbifolds in the following cases:
\begin{corollary}\label{cor:examples} Assume that $p\nmid n$ and let $\cX:=[X/G]$ be as in Theorem \ref{thm:orbifold}.
\begin{itemize}
\item[(i)] If the base field $k$ is finite, then the conjecture $\mathrm{C}^+(\cX)$ holds.
\item[(ii)] If $\mathrm{dim}(X)\leq 2$, if $\mathrm{dim}(X)=3$ and $\mathrm{C}^+(X)$ holds, or if $X$ is an abelian variety and $G$ acts by group homomorphisms, then the conjecture $\mathrm{C}^+(\cX)$ holds.
\item[(iii)] If $\mathrm{dim}(X)\leq 2$, then the conjecture $\mathrm{D}(\cX)$ holds.
\item[(iv)] Assume moreover that $k$ contains the $n^{\mathrm{th}}$ roots of unity. If $\mathrm{dim}(X)=3$ and $\mathrm{D}(X)$ holds, then the conjecture $\mathrm{D}(\cX)$ holds.
\end{itemize}
\end{corollary}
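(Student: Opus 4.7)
The plan is to apply Theorem \ref{thm:orbifold} and, case by case, verify its hypotheses by exhibiting the relevant classical conjectures for the cyclic fixed loci $X^\sigma$ (and, in (iii), for $X^\sigma \times \mathrm{Spec}(k[\sigma])$). Since $p\nmid n$, the order of every cyclic $\sigma\subseteq G$ is invertible in $k$, so $X^\sigma$ is smooth projective over $k$ and $k[\sigma]$ is a finite \'etale $k$-algebra, hence a finite product $\prod_{i} k_i$ of finite separable field extensions of $k$.

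For (i), Katz-Messing \cite{KM} yields $\mathrm{C}^+$ over any finite base field, so $\mathrm{C}^+(X^\sigma)$ holds for every $\sigma$ and Theorem \ref{thm:orbifold} concludes. For (ii), when $\mathrm{dim}(X)\leq 2$ every $X^\sigma$ has dimension $\leq 2$ and the classical conjecture gives $\mathrm{C}^+(X^\sigma)$; when $\mathrm{dim}(X)=3$ and $\mathrm{C}^+(X)$ holds, each irreducible component of $X^\sigma$ on which $\sigma$ acts nontrivially has dimension $\leq 2$, while the components on which $\sigma$ acts trivially coincide with components of $X$ and inherit $\mathrm{C}^+$ from the hypothesis. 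For the abelian-variety subcase of (ii), pick a generator $g$ of $\sigma$; since $G$ acts by group homomorphisms, $X^\sigma = \ker(1-g\colon X\to X)$ is a closed subgroup scheme of $X$. Because $|\sigma|$ is invertible in $k$, this subgroup scheme is smooth, so its identity component is a (smooth, connected, commutative, proper) abelian subvariety and the remaining components are translates thereof; Kleiman's theorem \cite[2.~Appendix]{Kleim} then gives $\mathrm{C}^+$ on each component, hence on $X^\sigma$.

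For (iii), the \'etale decomposition
$$
X^\sigma \times \mathrm{Spec}(k[\sigma]) \,=\, \coprod_{i} X^\sigma \times_{k} \mathrm{Spec}(k_i)
$$
expresses the left-hand side as a finite disjoint union of smooth projective $k$-schemes of dimension $\leq 2$, for which $\mathrm{D}$ is known. For (iv), the hypothesis that $k$ contains the $n^{\mathrm{th}}$ roots of unity allows us to invoke the refined form of Theorem \ref{thm:orbifold} that requires only $\mathrm{D}(X^\sigma)$; for $\sigma$ acting nontrivially each component of $X^\sigma$ has dimension $\leq 2$, while for $\sigma$ acting trivially $X^\sigma = X$ and $\mathrm{D}(X)$ is assumed.

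The argument is essentially a bookkeeping exercise with the hypotheses of Theorem \ref{thm:orbifold}, so I do not anticipate a genuine conceptual obstacle. The point requiring the most attention is the abelian-variety subcase of (ii), where one must ensure that the (possibly disconnected) fixed subgroup scheme decomposes into a disjoint union of abelian varieties; this reduces, as above, to the smoothness of $X^\sigma$ guaranteed by the invertibility of $|\sigma|$ in $k$, so the only subtlety is to remember to treat components of $X^\sigma$ individually when $X^\sigma$ is not connected.
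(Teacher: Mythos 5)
Your argument is correct and matches the paper's intent exactly: the paper offers no separate proof, asserting that the corollary follows ``automatically'' from Theorem \ref{thm:orbifold} together with the known cases of the classical conjectures recalled in the introduction (dimension $\leq 2$, Katz--Messing over finite fields, Kleiman for abelian varieties). Your write-up simply supplies the routine verifications the paper leaves implicit --- smoothness of the tame fixed loci $X^\sigma$, the splitting of $k[\sigma]$ as a product of separable field extensions, and the component-by-component treatment in the $3$-dimensional and abelian cases --- all of which are sound.
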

\section{Preliminaries}
\subsection{Dg categories}\label{sub:dg} 
For a survey on dg categories, we invite the reader to consult Keller's ICM address \cite{ICM-Keller}. Let $(\cC(k),\otimes, k)$ be the category of dg $k$-vector spaces. A {\em differential
  graded (=dg) category $\cA$} is a category enriched over $\cC(k)$
and a {\em dg functor} $F\colon\cA\to \cB$ is a functor enriched over
$\cC(k)$. In what follows, we write $\dgcat(k)$ for the category of (essentially small) dg categories and dg functors.

Let $\cA$ be a dg category. The opposite dg category $\cA^\op$ has the
same objects and $\cA^\op(x,y):=\cA(y,x)$. A {\em right dg
  $\cA$-module} is a dg functor $\cA^\op \to \cC_\dg(k)$ with values
in the dg category $\cC_\dg(k)$ of dg $k$-vector spaces. Following \cite[\S3.2]{ICM-Keller}, the {\em derived category} $\cD(\cA)$ of $\cA$ is defined as the localization of the category of right dg $\cA$-modules with respect to the objectwise quasi-isomorphisms. In what follows, we write $\cD_c(\cA)$ for the triangulated subcategory of compact objects. A dg functor $F\colon\cA\to \cB$ is called a {\em Morita equivalence} if it
induces an equivalence on derived categories $\cD(\cA) \simeq 
\cD(\cB)$; see \cite[\S4.6]{ICM-Keller}. The {\em tensor product $\cA\otimes\cB$} of dg categories is defined
as follows: the set of objects is $\mathrm{obj}(\cA)\times \mathrm{obj}(\cB)$ and
$(\cA\otimes\cB)((x,w),(y,z)):= \cA(x,y) \otimes \cB(w,z)$. A {\em dg $\cA\text{-}\cB$-bimodule} is a dg functor
$\cA\otimes \cB^\op \to \cC_\dg(k)$. An example is the dg
$\cA\text{-}\cB$-bimodule ${}_F\cB:\cA\otimes \cB^\op \to \cC_\dg(k), (x,z) \mapsto \cB(z,F(x))$ associated to a dg functor $F:\cA\to \cB$. Following Kontsevich \cite{Miami,finMot,IAS}, a dg category $\cA$ is called {\em smooth} if the dg $\cA\text{-}\cA$-bimodule ${}_{\id}\cA$ belongs to $\cD_c(\cA^\op\otimes \cA)$ and {\em proper} if $\sum_i \mathrm{dim}\, H^i\cA(x,y)< \infty$ for any ordered pair of objects $(x,y)$. Examples include finite-dimensional $k$-algebras of finite global dimension $A$ as well as the dg categories of perfect complexes $\perf_\dg(X)$ associated to smooth proper $k$-schemes $X$. In what follows, we write $\dgcat_{\mathrm{sp}}(k)$ for the full subcategory of smooth proper dg categories.

\subsection{Orbit categories}\label{sub:orbit}
Let $(\cC,\otimes, {\bf 1})$ be a $\bbQ$-linear, additive, symmetric monoidal category and $\cO \in \cC$ a $\otimes$-invertible object. The associated {\em orbit category} $\cC/_{\!-\otimes \cO}$ has the same objects as $\cC$ and morphisms 
$$\Hom_{\cC/_{\!-\otimes \cO}}(a,b):=\bigoplus_{n \in \bbZ} \Hom_\cC(a, b \otimes \cO^{\otimes n})\,.$$ 
Given objects $a, b, c$ and composable morphisms $\mathrm{f}=\{f_n\}_{n \in \bbZ}$ and $\mathrm{g}=\{g_n\}_{n \in \bbZ}$, the $i^{\mathrm{th}}$-component of $\mathrm{g}\circ \mathrm{f}$ is defined as $\sum_n (g_{i -n} \otimes \cO^{\otimes n})\circ f_n$. The canonical functor 
\begin{eqnarray*}
\iota\colon \cC \to \cC/_{\!-\otimes \cO} & a \mapsto a & f \mapsto \mathrm{f}=\{f_n\}_{n \in \bbZ}\,,
\end{eqnarray*}
where $f_0=f$ and $f_n=0$ if $n\neq 0$, is endowed with an isomorphism $\iota \circ (-\otimes \cO) \Rightarrow \iota$ and is $2$-universal among all such functors. Moreover, the category $\cC/_{\!-\otimes \cO}$ is $\bbQ$-linear, additive, and inherits from $\cC$ a symmetric monoidal structure making $\iota$ into a symmetric monoidal functor.
\section{Topological periodic cyclic homology}
Thanks to the work of Hesselholt \cite[\S4]{Hesselholt} and Blumberg-Mandell\footnote{See also the work of Antieau-Mathew-Nikolaus \cite{AMN}.} \cite[Thm.~A]{BM}, topological periodic cyclic homology\footnote{Recall that topological periodic cyclic homology is defined as the Tate cohomology of the circle group acting on topological Hochschild homology.} yields a symmetric monoidal functor
\begin{equation}\label{eq:TP}
TP_\pm(-)_{1/p}\colon \dgcat(k) \too \mathrm{vect}_{\bbZ/2}(K)
\end{equation} 
with values in the category of finite-dimensional $\bbZ/2$-graded $K$-vector spaces; consult also \cite[\S4]{Positive}. The following result, which is of independent interest, will be used below in the proof of Theorem \ref{thm:main} and Corollary \ref{cor:zeta}.
\begin{theorem}[Scholze \cite{Scholze}]\label{thm:TP}
Given a smooth proper $k$-scheme $X$, we have a natural isomorphism of $\bbZ/2$-graded $K$-vector spaces:
\begin{equation}\label{eq:Scholze}
TP_\pm(\perf_\dg(X))_{1/p} \simeq (\bigoplus_{i\,\mathrm{even}}H^i_{\mathrm{crys}}(X), \bigoplus_{i\,\mathrm{odd}}H^i_{\mathrm{crys}}(X))\,.
\end{equation}
\end{theorem}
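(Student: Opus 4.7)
The plan is to build the isomorphism through the motivic filtration on topological periodic cyclic homology established by Bhatt--Morrow--Scholze. First I would recall that for a smooth $k$-algebra $R$ (and, by descent, for the global sections of $\perf_\dg(X)$ with $X$ smooth proper over $k$), there is a complete, multiplicative, descending filtration $\mathrm{Fil}^\bullet TP(-)$ whose associated graded pieces are shifts of Nygaard-completed prismatic cohomology of $X$ relative to the crystalline prism $(W(k), (p))$. Constructing this filtration is the main input and I would simply quote it; the functor \eqref{eq:TP} extends naturally to this filtered setting.

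The core step is the identification of the graded pieces after inverting $p$. After $p$ is inverted, the Nygaard filtration on prismatic cohomology collapses, and the Bhatt--Morrow--Scholze comparison gives a natural isomorphism $\gr^i TP(\perf_\dg(X))_{1/p} \simeq H^{2i-\ast}_{\mathrm{crys}}(X/W(k))\otimes_{W(k)} K$ in an appropriate shift, using the $\bbZ/2$-periodicity of $TP$ to fold the grading. Concretely, contributions with $\ast$ even lie in the even component and $\ast$ odd in the odd component of $TP_\pm$.

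Next I would verify that the spectral sequence arising from the filtration degenerates and splits after inverting $p$. Since $X$ is smooth and proper, each $H^i_{\mathrm{crys}}(X)\otimes_{W(k)} K$ is a finite-dimensional $K$-vector space concentrated in $0 \leq i \leq 2\dim(X)$. Combining this finiteness with the $2$-periodicity and completeness of the filtration forces both convergence of the spectral sequence and the vanishing of any higher extensions, so the filtration splits non-canonically and yields the claimed isomorphism of $\bbZ/2$-graded $K$-vector spaces. Naturality in $X$ follows from the functoriality of the motivic filtration, and the monoidal compatibility is inherited from \eqref{eq:TP}.

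The hard part is the second step, namely the identification of graded pieces with crystalline cohomology after inverting $p$. This rests on the prismatic--crystalline comparison of Bhatt--Morrow--Scholze together with the fact that Nygaard completion is a $p$-adic phenomenon, so becomes invisible rationally; handling this carefully, rather than degeneration of the resulting spectral sequence, is where essentially all the geometric content of Scholze's theorem lies.
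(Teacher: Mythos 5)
Your overall strategy --- run everything through the Bhatt--Morrow--Scholze motivic filtration $\{\mathrm{fil}^n TP(X)\}_{n\in\bbZ}$ and identify the graded pieces $\pi_\ast(\mathrm{gr}^n TP(X)[1/p])$ with $H^{\ast-2n}_{\mathrm{crys}}(X)$ --- is the same as the paper's, and the graded-piece identification is indeed simply quoted from Bhatt--Morrow--Scholze (Thms.~1.10 and 1.12(4)) in the paper as well. The genuine gap is in the step you pass over most quickly: degeneration and splitting. Finite-dimensionality of the $E_1$-page and of the abutment does not force the differentials to vanish; a priori one only gets $\dim_K TP_\ast(X)_{1/p}\le\sum_n\dim_K H^{\ast-2n}_{\mathrm{crys}}(X)$, and there is no independent computation of the left-hand side that would let you upgrade this to an equality. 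Nor does working over the field $K$ kill ``higher extensions'' at the level of the spectral sequence --- it only guarantees that, \emph{once} the induced filtration on $\pi_\ast$ is known to have the correct associated graded, it splits as vector spaces. The paper supplies the missing mechanism via Adams operations: choosing a prime $l\ne p$, the operation $\psi_l$ preserves the filtration and acts on $\pi_\ast(\mathrm{gr}^m TP(X)[1/p])$ by multiplication by $l^m$, so that $\psi_l-l^n$ is invertible on every graded piece with $m\ne n$ (here characteristic zero of $K$ is used); an inductive five-lemma argument (Lemma~\ref{lem:aux2}) then identifies the eigenspace $TP_\ast(X)_{1/p}^{\psi_l=l^n}$ with $\pi_\ast(\mathrm{gr}^n TP(X)[1/p])$. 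This weight argument is what simultaneously proves degeneration and produces the direct sum decomposition; without it, your third step does not go through.

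A second, related problem: you conclude with a splitting that you yourself describe as non-canonical, whereas the theorem asserts a \emph{natural} isomorphism, and naturality is genuinely used downstream (the proof of Theorem~\ref{thm:main} needs the composition \eqref{eq:composition} to be naturally isomorphic to \eqref{eq:2-periodic}). The Adams eigenspace decomposition is canonical and functorial in $X$, which is precisely why the paper's route delivers the naturality your argument would lack.
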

\begin{proof}
In order to simplify the exposition, we will write $TP(X)$ for the spectrum $TP(\perf_\dg(X))$. Following Bhatt-Morrow-Scholze \cite[\S9.4]{BMS2}, let us choose a prime number $l\neq p$ and consider the associated Adams operation $\psi_l$; since we are working over a perfect base field $k$ of characteristic $p>0$, the spectrum $TP(X)$ is already $p$-complete. As proved in \cite[Thm.~1.12(2)]{BMS2}, the spectrum $TP(X)$ admits a ``motivic'' exhaustive decreasing $\bbZ$-indexed filtration $\{\mathrm{fil}^n TP(X)\}_{n \in \bbZ}$. After inverting $p$, this leads to an induced filtration $\{\mathrm{fil}^n TP(X)[1/p]\}_{n \in \bbZ}$ of $TP(X)[1/p]$. Since the Adams operation $\psi_l$ preserves this filtration, we hence obtain the $K$-linear homomorphisms
\begin{equation}\label{eq:homo1}
(\pi_\ast(\mathrm{fil}^n TP(X)[1/p]))^{\psi_l=l^n} \too (\pi_\ast(TP(X)[1/p]))^{\psi_l=l^n} \simeq TP_\ast(X)_{1/p}^{\psi_l=l^n}
\end{equation}
\begin{equation}\label{eq:homo2}
(\pi_\ast(\mathrm{fil}^n TP(X)[1/p]))^{\psi_l=l^n} \too (\pi_\ast(\mathrm{gr}^nTP(X)[1/p]))^{\psi_l=l^n}\,,
\end{equation}
where $(-)^{\psi_l=l^n}$ stands for the $K$-linear subspace of those elements $v$ such that $\psi_l(v)=l^n \cdot v$. We claim that the above homomorphisms \eqref{eq:homo1}-\eqref{eq:homo2} are invertible. Consider the following cofiber sequence of spectra
\begin{equation}\label{eq:cofiber}
\mathrm{fil}^n TP(X)[1/p] \too TP(X)[1/p] \too \frac{TP(X)[1/p]}{\mathrm{fil}^n TP(X)[1/p]}=:\mathrm{cofiber}
\end{equation}
and the endomorphism of the associated long exact sequence of $K$-vector spaces:
$$
\xymatrix@C=1.8em@R=2.5em{
\cdots \pi_{\ast+1}(\mathrm{cofiber}) \ar[d]_-{\psi^l-l^n}\ar[r] & \pi_\ast(\mathrm{fil}^n TP(X)[1/p]) \ar[d]_-{\psi^l-l^n} \ar[r] & \pi_\ast(TP(X)[1/p])\ar[d]_-{\psi^l-l^n} \ar[r] & \pi_\ast(\mathrm{cofiber}) \ar[d]_-{\psi^l-l^n} \cdots \\
\cdots \pi_{\ast+1}(\mathrm{cofiber}) \ar[r] & \pi_\ast(\mathrm{fil}^n TP(X)[1/p]) \ar[r] & \pi_\ast(TP(X)[1/p]) \ar[r] & \pi_\ast(\mathrm{cofiber}) \cdots
}
$$
Since $X$ is a smooth proper $k$-scheme, the dg category $\perf_\dg(X)$ is smooth and proper. This implies that the $K$-vector spaces $\pi_\ast(TP(X)[1/p])\simeq TP_\ast(X)_{1/p}$ (and hence $\pi_\ast(\mathrm{fil}^n TP(X)[1/p])$) are finite dimensional. Therefore, thanks to the general Lemma \ref{lem:aux2} below, in order to prove that the above homomorphisms \eqref{eq:homo1} are invertible, it suffices to show that the following endomorphisms are invertible:
\begin{equation}\label{eq:endomorphism}
\psi^l - l^n \colon \pi_\ast(\mathrm{cofiber}) \too \pi_\ast(\mathrm{cofiber})\,.
\end{equation}
Note that the spectrum $\frac{TP(X)[1/p]}{\mathrm{fil}^n TP(X)[1/p]}$ comes naturally equipped with the following exhaustive decreasing filtration $\{\frac{TP(X)[1/p]}{\mathrm{fil}^m TP(X)[1/p]}\}_{m<n}$, whose graded pieces are equal to $\{\mathrm{gr}^m TP(X)[1/p]\}_{m<n}$. As proved in \cite[Prop.~9.14]{BMS2}, the induced endomorphism $\psi^l$ of $\pi_\ast(\mathrm{gr}^m TP(X)[1/p])$ acts by multiplication with $l^m$. Since $m<n$ and $K$ is of characteristic zero, this implies that the following endomorphisms are invertible:
\begin{eqnarray}\label{eq:invertible}
& \psi^l - l^n = l^m - l^n \colon \pi_\ast(\mathrm{gr}^m TP(X)[1/p]) \stackrel{\simeq}{\too} \pi_\ast(\mathrm{gr}^m TP(X)[1/p]) & m<n\,.
\end{eqnarray}
Now, a standard inductive argument using the isomorphisms \eqref{eq:invertible} and the $5$-lemma allows us to conclude that the above homomorphisms \eqref{eq:homo1} are invertible. The proof of the invertibility of the homomorphisms \eqref{eq:homo2} is similar: simply replace the above cofiber sequence \eqref{eq:cofiber} by the following fiber sequence of spectra:
$$ \mathrm{fil}^{n+1} TP(X)[1/p] \too \mathrm{fil}^n TP(X)[1/p] \too \mathrm{gr}^n TP(X)[1/p]\,.$$
This concludes the proof of our claim.

As mentioned above, the induced endomorphism $\psi^l$ of $\pi_\ast(\mathrm{gr}^n TP(X)[1/p])$ acts by multiplication with $l^n$. Thanks to \eqref{eq:homo1}-\eqref{eq:homo2}, this leads to natural isomorphisms:
$$ TP_\ast(X)_{1/p}^{\psi_l=l^n}\simeq (\pi_\ast(\mathrm{gr}^n TP(X)[1/p]))^{\psi_l=l^n} = \pi_\ast(\mathrm{gr}^n TP(X)[1/p])\,.$$
Hence, using the fact that the filtration $\{\mathrm{fil}^n TP(X)[1/p]\}_{n \in \bbZ}$ of $TP(X)[1/p]$ is exhaustive, we obtain the following natural isomorphisms of $K$-vector spaces:
\begin{equation}\label{eq:natural}
TP_\ast(X)_{1/p} \simeq \bigoplus_{n \in \bbZ} TP_\ast(X)_{1/p}^{\psi_l=l^n} \simeq \bigoplus_{n \in \bbZ} \pi_\ast(\mathrm{gr}^n TP(X)[1/p])\,.
\end{equation}
Making use of the natural isomorphisms $\pi_\ast(\mathrm{gr}^n TP(X)[1/p])\simeq H_{\mathrm{crys}}^{\ast- 2n}(X)$ constructed by Bhatt-Morrow-Scholze in \cite[Thms. 1.10 and 1.12(4)]{BMS2}, we hence conclude that $TP_\ast(X)_{1/p}\simeq \bigoplus_{n \in \bbZ} H^{\ast-2n}_{\mathrm{crys}}(X)$. This automatically yields the natural isomorphism of $\bbZ/2$-graded $K$-vector spaces \eqref{eq:Scholze}, and so the proof is finished.
\end{proof}
\begin{lemma}\label{lem:aux2}
Consider the following commutative diagram of vector spaces:
$$
\xymatrix{
V_1 \ar[d]_-{f_1} \ar[r] & V_2 \ar[d]_-{f_2} \ar[r] & V_3 \ar[d]_-{f_3} \ar[r] & V_4 \ar[d]_-{f_4} \\
V_1 \ar[r] & V_2 \ar[r] & V_3 \ar[r] & V_4\,.
}
$$
Assume that the rows are exact, that $V_2$ and $V_3$ are finite dimensional, that $f_1$ is surjective, and that $f_4$ is injective. Under these assumptions, the induced homomorphism $\mathrm{Ker}(f_2) \to \mathrm{Ker}(f_3)$ is invertible.
\end{lemma}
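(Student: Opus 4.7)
The plan is to verify that the induced map $g \colon \mathrm{Ker}(f_2) \to \mathrm{Ker}(f_3)$ is bijective by establishing surjectivity and injectivity separately through a diagram chase, with the injectivity step invoking the snake lemma in order to exploit the finite-dimensionality hypothesis. Denote the common horizontal arrows (top and bottom) by $\alpha_1,\alpha_2,\alpha_3$ and set $K := \mathrm{Ker}(\alpha_1)$.

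For surjectivity, given $w \in \mathrm{Ker}(f_3)$, the commutativity relation yields $f_4(\alpha_3(w)) = \alpha_3(f_3(w)) = 0$; injectivity of $f_4$ then forces $\alpha_3(w) = 0$, so exactness at $V_3$ produces $v \in V_2$ with $\alpha_2(v) = w$. Next, $\alpha_2(f_2(v)) = f_3(w) = 0$ places $f_2(v)$ inside $\mathrm{Ker}(\alpha_2) = \mathrm{Im}(\alpha_1)$, say $f_2(v) = \alpha_1(u_0)$. Using surjectivity of $f_1$, one lifts $u_0 = f_1(u_1)$, and then $v' := v - \alpha_1(u_1)$ belongs to $\mathrm{Ker}(f_2)$ and satisfies $g(v') = w$.

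For injectivity, suppose $v \in \mathrm{Ker}(f_2)$ with $\alpha_2(v) = 0$. Exactness at $V_2$ gives $v = \alpha_1(u)$, and then $0 = f_2(v) = \alpha_1(f_1(u))$ forces $f_1(u) \in K$. The crucial point is that $f_1$ preserves $K$ (by commutativity with $\alpha_1$), hence induces an endomorphism $\hat f_1$ on $V_1/K$, and via $\alpha_1$ this quotient identifies with $\mathrm{Im}(\alpha_1) \subseteq V_2$, which is finite-dimensional by hypothesis. Applying the snake lemma to $0 \to K \to V_1 \to V_1/K \to 0$ equipped with the endomorphism $f_1$, the vanishing $\mathrm{Coker}(f_1) = 0$ (from surjectivity of $f_1$) forces $\mathrm{Coker}(\hat f_1) = 0$, so $\hat f_1$ is surjective; finite-dimensionality of $V_1/K$ then upgrades this to bijectivity. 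Since $f_1(u) \in K$ implies $\hat f_1(u + K) = K$, injectivity of $\hat f_1$ yields $u \in K$, whence $v = \alpha_1(u) = 0$. This injectivity argument is the main obstacle: it genuinely requires finite-dimensionality of $V_2$ (through $V_1/K$), since a surjective endomorphism of an infinite-dimensional space need not be injective.
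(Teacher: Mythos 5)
Your proof is correct. The surjectivity half is the same four-lemma chase as in the paper (push $w\in\mathrm{Ker}(f_3)$ back to $V_2$ using injectivity of $f_4$ and exactness, then correct the preimage by an element of $\mathrm{Im}(\alpha_1)$ using surjectivity of $f_1$). For injectivity you take a genuinely different route: the paper runs the dual chase to show $\mathrm{coKer}(f_2)\to\mathrm{coKer}(f_3)$ is injective and then invokes $\dim\mathrm{Ker}(f_i)=\dim\mathrm{coKer}(f_i)$ for the endomorphisms $f_2,f_3$ of finite-dimensional spaces, whereas you chase the offending element back to $V_1$ and reduce to the statement that the induced endomorphism of $V_1/\mathrm{Ker}(\alpha_1)\simeq\mathrm{Im}(\alpha_1)$ is surjective, hence bijective by finite-dimensionality. (The snake lemma is overkill here --- surjectivity of $f_1$ passes to any invariant quotient directly --- but it is correct.) Your variant is slightly sharper in that it only uses finite-dimensionality of $V_2$ (indeed of $\mathrm{Im}(\alpha_1)$) and never of $V_3$, while the paper's argument uses both; both versions isolate correctly where finite-dimensionality enters, namely that a surjective endomorphism of an infinite-dimensional space need not be injective. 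Either argument suffices for the application in the proof of Theorem \ref{thm:TP}.
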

\begin{proof}
On the one hand, a simple diagram chasing argument implies that the induced homomorphism $\mathrm{Ker}(f_2) \to \mathrm{Ker}(f_3)$ is surjective. On the other hand, a dual diagram chasing argument implies that the induced homomorphism $\mathrm{coKer}(f_2) \to \mathrm{coKer}(f_3)$ is injective. Making use of the equalities $\mathrm{dim}\,\mathrm{Ker}(f_2)= \mathrm{dim}\,\mathrm{coKer}(f_2)$ and $\mathrm{dim}\,\mathrm{Ker}(f_3)= \mathrm{dim}\,\mathrm{coKer}(f_3)$ and of the finite dimensionality of the vector spaces $\mathrm{Ker}(f_2)$ and $\mathrm{Ker}(f_3)$, we hence conclude that the induced homomorphism $\mathrm{Ker}(f_2) \to \mathrm{Ker}(f_3)$ is moreover injective.
\end{proof}
\section{Noncommutative motives}\label{sec:NC}
For a book, resp. survey, on noncommutative motives, we invite the reader to consult \cite{book}, resp. \cite{survey}. Recall from \cite[\S4.1]{book} the definition of the category of noncommutative Chow motives $\NChow(k)_\bbQ$. By construction, this $\bbQ$-linear category is additive, rigid symmetric monoidal, and comes equipped with a symmetric monoidal functor $U(-)_\bbQ\colon \dgcat_{\mathrm{sp}}(k) \to \NChow(k)_\bbQ$. Moreover, we have
$$\Hom_{\NChow(k)_\bbQ}(U(\cA)_\bbQ, U(\cB)_\bbQ)\simeq K_0(\cD_c(\cA^\op \otimes \cB))_\bbQ =:K_0(\cA^\op \otimes \cB)_\bbQ\,.$$
Recall from \cite[Prop.~4.2]{Positive} that the above functor \eqref{eq:TP} yields a $\bbQ$-linear symmetric monoidal functor $TP_\pm(-)_{1/p}\colon \NChow(k)_\bbQ \to \mathrm{vect}_{\bbZ/2}(K)$. Under these notations, the category of {\em noncommutative homological motives} $\NHom(k)_\bbQ$ is defined as the idempotent completion of the quotient $\NChow(k)_\bbQ/\mathrm{Ker}(TP_\pm(-)_{1/p})$. 

Given a $\bbQ$-linear, additive, rigid symmetric monoidal category $(\cC,\otimes, {\bf 1})$, its $\cN$-ideal is defined as follows ($\mathrm{tr}(g\circ f)$ stands for the categorical trace of $g\circ f$):
\begin{equation}\label{eq:N}
\cN(a, b) := \{f \in \Hom_\cC(a, b) \,\,|\,\, \forall g \in \Hom_\cC(b, a)\,\, \mathrm{we}\,\,\mathrm{have}\,\, \mathrm{tr}(g\circ f) =0 \}\,.
\end{equation}
This is the largest $\otimes$-ideal of $\cC$ distinct from the entire category. Under these notations, the category of {\em noncommutative numerical motives} $\NNum(k)_\bbQ$ is defined as the idempotent completion of the quotient $\NChow(k)_\bbQ/\cN$. 

\section{Noncommutative standard conjectures of type $\mathrm{C}^+$ and $\mathrm{D}$}\label{sec:NC-conj}
Given a smooth proper dg category $\cA$, consider the even K\"unneth projector $\pi^\cA_+$ of the $\bbZ/2$-graded $K$-vector $TP_\pm(\cA)_{1/p}$, as well as the following $\bbQ$-vector spaces\footnote{As explained in \cite[\S6]{Positive}, the $\bbQ$-vector space $K_0(\cA)_\bbQ/_{\!\mathrm{num}}$ can be alternatively defined as the $\bbQ$-linearization of the quotient of $K_0(\cA)$ by the (left=right) kernel of the classical Euler pairing.}:
\begin{eqnarray*}
K_0(\cA)_\bbQ/_{\!\sim \mathrm{hom}}:= \Hom_{\NHom(k)_\bbQ}(U(k)_\bbQ, U(\cA)_\bbQ)\\ 
K_0(\cA)_\bbQ/_{\!\sim \mathrm{num}}:= \Hom_{\NNum(k)_\bbQ}(U(k)_\bbQ, U(\cA)_\bbQ) \,.
\end{eqnarray*}
Under these notations, Grothendieck's standard conjectures of type $\mathrm{C}^+$ and $\mathrm{D}$ admit the following noncommutative counterparts:

\vspace{0.1cm}

{\bf Conjecture $\mathrm{C}^+_{\mathrm{nc}}(\cA)$:} The even K\"unneth projector $\pi_+^\cA$ is {\em algebraic}, \ie there exists an endomorphism $\underline{\pi}_+^\cA$ of $U(\cA)_\bbQ$ such that $TP_\pm(\underline{\pi}_+^\cA)_{1/p}= \pi_+^\cA$.

\vspace{0.1cm}

{\bf Conjecture $\mathrm{D}_{\mathrm{nc}}(\cA)$:} The equality $K_0(\cA)_\bbQ/_{\!\sim\mathrm{hom}}=K_0(\cA)_\bbQ/_{\!\sim\mathrm{num}}$ holds.

\begin{remark}[Morita invariance]
Let $\cA$ and $\cB$ be two smooth proper dg categories. By construction, the functor $U(-)_\bbQ\colon \dgcat_{\mathrm{sp}}(k) \to \NChow(k)_\bbQ$ sends Morita equivalences to isomorphisms. Therefore, whenever $\cA$ and $\cB$ are Morita equivalent, we have $\mathrm{C}^+_{\mathrm{nc}}(\cA) \Leftrightarrow \mathrm{C}^+_{\mathrm{nc}}(\cB)$ and $\mathrm{D}^+_{\mathrm{nc}}(\cA) \Leftrightarrow \mathrm{D}^+_{\mathrm{nc}}(\cB)$.
\end{remark}

\begin{remark}[Odd K\"unneth projector]
Let $\pi_-^\cA$ be the odd K\"unneth projector of the $\bbZ/2$-graded $K$-vector space $TP_\pm(\cA)_{1/p}$. Note that if the even K\"unneth projector $\pi^\cA_+$ is algebraic, then the odd K\"unneth projector $\pi_-^\cA$ is also algebraic: simply take for $\underline{\pi}_-^\cA$ the difference $\id_{U(\cA)_\bbQ}- \underline{\pi}_+^\cA$. 
\end{remark}
\begin{remark}[Stability under tensor products]\label{rk:Kunneth}
Given smooth proper dg categories $\cA$ and $\cB$, we have the equality $\pi_+^{\cA\otimes \cB}= \pi_+^\cA \otimes \pi_+^\cB + \pi^\cA_- \otimes \pi_-^\cB$. Consequently, since $TP_\pm(-)_{1/p}$ is an additive symmetric monoidal functor, we obtain the implication: 
\begin{equation}\label{eq:implication}
\mathrm{C}^+_{\mathrm{nc}}(\cA) + \mathrm{C}^+_{\mathrm{nc}}(\cB) \Rightarrow \mathrm{C}^+_{\mathrm{nc}}(\cA\otimes \cB)
\end{equation}
Given smooth projective $k$-schemes $X$ and $Y$, the dg categories $\perf_\dg(X\times Y)$ and $\perf_\dg(X) \otimes \perf_\dg(Y)$ are Morita equivalent; see \cite[Lem.~4.26]{Gysin}. Therefore, by combining \eqref{eq:implication} with Theorem \ref{thm:main}, we obtain $\mathrm{C}^+(X) + \mathrm{C}^+(Y) \Rightarrow \mathrm{C}^+(X\times Y)$.
\end{remark}
\begin{proposition}\label{prop:fin}
Given a finite-dimensional $k$-algebra of finite global dimension $A$, the conjectures $\mathrm{C}^+_{\mathrm{nc}}(A)$ and $\mathrm{D}_{\mathrm{nc}}(A)$ hold.
\end{proposition}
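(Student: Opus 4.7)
For $\mathrm{D}_{\mathrm{nc}}(A)$, my plan is to show that the Euler pairing on $K_0(A)_\bbQ$ is non-degenerate, so that the numerical kernel already vanishes, and then appeal to the universal property of $\cN$ to force the homological kernel to vanish as well. By Morita invariance of $U(-)_\bbQ$ I may assume $A$ is basic; let $e_1,\dots,e_n$ be a complete set of primitive orthogonal idempotents and $P_i:=Ae_i$ the indecomposable projectives, so that $\{[P_i]\}$ is a $\bbQ$-basis of $K_0(A)_\bbQ$. Since $\mathrm{Ext}^k(P_i,P_j)=0$ for $k>0$, the Euler pairing in this basis is given by the Cartan matrix $C_{ij}=\dim_k\Hom(P_i,P_j)$; the hypothesis of finite global dimension makes $C$ invertible over $\bbQ$ (via its factorization through the unipotent matrix of composition multiplicities of simples in finite projective resolutions and the diagonal matrix of endomorphism dimensions of simples). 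Hence the numerical kernel in $K_0(A)_\bbQ$ is trivial. On the other hand, $\mathrm{Ker}(TP_\pm(-)_{1/p})$ is a $\otimes$-ideal of $\NChow(k)_\bbQ$ distinct from the whole category (since $\id_{U(k)_\bbQ}$ is sent to $\id_K\neq 0$), so by the universality of $\cN$ we have $\sim_{\mathrm{hom}}\,\subseteq\,\sim_{\mathrm{num}}$. Combining, $K_0(A)_\bbQ/_{\sim\mathrm{hom}}=K_0(A)_\bbQ=K_0(A)_\bbQ/_{\sim\mathrm{num}}$.

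For $\mathrm{C}^+_{\mathrm{nc}}(A)$, the plan is to show that $TP_\pm(A)_{1/p}$ is concentrated in even degree; then $\pi_+^A=\id$ and the tautological lift $\underline{\pi}_+^A:=\id_{U(A)_\bbQ}$ satisfies $TP_\pm(\underline{\pi}_+^A)_{1/p}=\pi_+^A$. To verify the odd vanishing, I would apply rational nilpotent-invariance of $TP(-)_{1/p}$ to replace $A$ by its semisimple quotient $A/\mathrm{rad}(A)$, and then use Morita invariance together with the Wedderburn decomposition to reduce to the case of a finite-dimensional division $k$-algebra $D$. Since $k$ is perfect, $D$ is separable over $k$, so $\HH_n(D/k)=0$ for $n>0$; combined with the Bhatt--Morrow--Scholze filtration on $TP(D)[1/p]$ used in the proof of Theorem \ref{thm:TP} (whose graded pieces involve shifts of $\HH$), this forces $TP_-(D)_{1/p}=0$.

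The main obstacle is the nilpotent-invariance step in the $\mathrm{C}^+_{\mathrm{nc}}$ argument: in characteristic $p$ one has to invoke the rational Dundas--Goodwillie--McCarthy-type invariance of $TP(-)_{1/p}$ under the square-zero extensions building up $\mathrm{rad}(A)$, and check its compatibility with the BMS filtration after inverting $p$. The $\mathrm{D}_{\mathrm{nc}}$ half, by contrast, is essentially formal once the classical invertibility of the Cartan matrix is in hand.
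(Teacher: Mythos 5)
Your argument for $\mathrm{D}_{\mathrm{nc}}(A)$ is correct and genuinely different from the paper's. The paper never touches the Euler form: it invokes nilinvariance of $U(-)_\bbQ$ (\cite[Thm.~3.15]{Azumaya}) to replace $A$ by $A/J(A)$, Artin--Wedderburn together with $U(D_i)_\bbQ\simeq U(Z_i)_\bbQ$ for central simple algebras (\cite[Thm.~2.1]{Azumaya}) to reduce to the finite separable field extensions $Z_i$ of $k$, and then Theorem \ref{thm:main} to land in the commutative zero-dimensional case where both conjectures are known. Your route --- invertibility of the Cartan matrix forces non-degeneracy of the Euler pairing, hence trivial numerical kernel, and $\mathrm{Ker}(TP_\pm(-)_{1/p})\subseteq\cN$ by maximality of $\cN$ then kills the homological kernel too --- is more elementary and self-contained; it matches the paper's footnote describing $K_0(\cA)_\bbQ/_{\!\sim\mathrm{num}}$ as the quotient by the kernel of the classical Euler pairing. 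What it buys is independence from Theorem \ref{thm:main} and from the Azumaya-algebra machinery; what it loses is the explicit identification of $U(A)_\bbQ$ with a sum of motives of points, which the paper reuses for the $\mathrm{C}^+$ half.

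Your $\mathrm{C}^+_{\mathrm{nc}}(A)$ half has the right goal ($TP_\pm(A)_{1/p}$ is purely even, so $\underline{\pi}^A_+=\id_{U(A)_\bbQ}$ works), but both tools you name are off target. Dundas--Goodwillie--McCarthy compares $K$-theory with $TC$, not with $TP$, and $TP$ is not a truncating invariant in general; the nilinvariance you need is already available for free, because $TP_\pm(-)_{1/p}$ factors through $\NChow(k)_\bbQ$ (\cite[Prop.~4.2]{Positive}) and $U(A)_\bbQ\simeq U(A/J(A))_\bbQ$ there. Likewise, the Bhatt--Morrow--Scholze filtration is constructed for commutative (quasisyntomic) rings and its graded pieces are prismatic, not Hochschild; it is not available for a noncommutative division algebra $D$. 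Two correct replacements: either note that $\HH_\ast(D/k)$ concentrated in degree $0$ together with $\THH_\ast(k)=k[\sigma]$, $|\sigma|=2$, forces $\THH_\ast(D)$ to be even, whence the Tate spectral sequence for $TP=\THH^{tS^1}$ degenerates and $TP_-(D)_{1/p}=0$; or, as the paper effectively does, use $U(D)_\bbQ\simeq U(Z(D))_\bbQ$ and Theorem \ref{thm:TP} for the zero-dimensional smooth scheme $\Spec(Z(D))$, whose crystalline cohomology sits in degree $0$. With either repair your outline closes.
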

\begin{proof}
The proof is similar for both cases. Hence, we will address solely conjecture $\mathrm{D}_{\mathrm{nc}}(A)$. Thanks to \cite[Thm.~3.15]{Azumaya}, we have $U(A)_\bbQ \simeq U(A/J(A))_\bbQ$, where $J(A)$ stands for the Jacobson radical of $A$. Let us write $S_1, \dots, S_m$ for the simple (right) $A/J(A)$-modules and $D_1:=\mathrm{End}_{A/J(A)}(S_1), \ldots, D_m:=\mathrm{End}_{A/J(A)}(S_m)$ for the associated division $k$-algebras. The Artin-Wedderburn theorem implies that the semi-simple quotient $A/J(A)$ is Morita equivalent to $D_1 \times \cdots \times D_m$. Moreover, the center $Z_i$ of $D_i$ is a finite field extension of $k$ and $D_i$ is a central simple $Z_i$-algebra. Making use of \cite[Thm.~2.1]{Azumaya}, we hence conclude that $U(D_i)_\bbQ \simeq U(Z_i)_\bbQ$. Consequently,  thanks to Theorem \ref{thm:main}, we obtain the equivalences of conjectures:
$$\mathrm{D}_{\mathrm{nc}}(A) \Leftrightarrow \mathrm{D}_{\mathrm{nc}}(A/J(A)) \Leftrightarrow \mathrm{D}_{\mathrm{nc}}(Z_1) + \cdots + \mathrm{D}_{\mathrm{nc}}(Z_m) \Leftrightarrow \mathrm{D}(Z_1) + \cdots + \mathrm{D}(Z_m)\,.$$
The proof follows now from the fact that $\mathrm{dim}(\mathrm{Spec}(Z_i))=0$ for every $i$.
\end{proof}
\section{Proof of Theorem \ref{thm:main}}
\subsection*{Type $\mathrm{C}^+$}
Let $\Chow(k)_\bbQ$ be the classical category of Chow motives; see Manin \cite{Manin}. By construction, this $\bbQ$-linear category is additive, rigid symmetric monoidal, and comes equipped with a symmetric monoidal functor $\mathfrak{h}(-)_\bbQ\colon \mathrm{SmProj}(k)^\op \to \Chow(k)_\bbQ$ defined on smooth projective $k$-schemes. Crystalline cohomology gives rise to a symmetric monoidal functor $H^\ast_{\mathrm{crys}}\colon \Chow(k)_\bbQ \to \mathrm{cect}_\bbZ(K)$ with values in the category of finite-dimensional $\bbZ$-graded $K$-vector spaces. By composing it with the functor $\mathrm{vect}_\bbZ(K) \to \mathrm{vect}_{\bbZ/2}(K)$ that sends $\{V_i\}_{i \in \bbZ}$ to $(\bigoplus_{i\,\text{even}} V_i, \bigoplus_{i \, \text{odd}} V_i)$, we hence obtain the following $\bbQ$-linear symmetric monoidal functor:
\begin{equation}\label{eq:2-periodic}
\Chow(k)_\bbQ \too \mathrm{vect}_{\bbZ/2}(K)  \quad \quad  \mathfrak{h}(X)_\bbQ \mapsto (\bigoplus_{i\,\text{even}}H^i_{\mathrm{crys}}(X), \bigoplus_{i\,\text{odd}}H^i_{\mathrm{crys}}(X))\,.
\end{equation}
Recall from \cite[Thm.~4.3]{book} that there exists a $\bbQ$-linear, fully-faithful, symmetric monoidal functor $\Phi$ making the following diagram commute
\begin{equation}\label{eq:diagram-big}
\xymatrix{
\mathrm{SmProj}(k)^\op \ar[rr]^-{X\mapsto \perf_\dg(X)} \ar[d]_-{\mathfrak{h}(-)_\bbQ} && \dgcat_{\mathrm{sp}}(k) \ar[dd]^-{U(-)_\bbQ} \\
\Chow(k)_\bbQ \ar[d]_-\iota && \\
\Chow(k)_\bbQ/_{\!-\otimes \bbQ(1)} \ar[rr]_-{\Phi} && \NChow(k)_\bbQ\,,
}
\end{equation}
where $\Chow(k)_\bbQ/_{\!-\otimes \bbQ(1)}$ stands for the orbit category with respect to the Tate motive $\bbQ(1)$; see \S\ref{sub:orbit}. Consider the following composition:
 \begin{equation}\label{eq:composition}
 \Chow(k)_\bbQ \stackrel{\iota}{\too} \Chow(k)_\bbQ/_{\!-\otimes \bbQ(1)} \stackrel{\Phi}{\too} \NChow(k)_\bbQ \stackrel{TP_\pm(-)_{1/p}}{\too} \mathrm{vect}_{\bbZ/2}(K)\,.
 \end{equation}
We now have all the ingredients necessary to prove the equivalence of conjectures $\mathrm{C}^+(X) \Leftrightarrow \mathrm{C}^+_{\mathrm{nc}}(\perf_\dg(X))$. Assume first that the conjecture $\mathrm{C}^+(X)$ holds, \ie that there exists an endomorphism $\underline{\pi}^+_X$ of the Chow motive $\mathfrak{h}(X)_\bbQ$ such that $H^\ast_{\mathrm{crys}}(\underline{\pi}_X^+)= \pi^+_X$. Thanks to the natural isomorphism \eqref{eq:Scholze} and to the commutative diagram \eqref{eq:diagram-big}, the composition \eqref{eq:composition} is naturally isomorphic to the above functor \eqref{eq:2-periodic}. Consequently, by taking the image of $\underline{\pi}^+_X$ under the composition $\Phi \circ \iota$, we conclude that the conjecture $\mathrm{C}^+_{\mathrm{nc}}(\perf_\dg(X))$ also holds. 

Assume now that the conjecture $\mathrm{C}^+_{\mathrm{nc}}(\perf_\dg(X))$ holds, \ie that there exists an endomorphism $\underline{\pi}_+$ of $U(\perf_\dg(X))_\bbQ$ such that $TP_\pm(\underline{\pi}_+)_{1/p}= \pi_+^{\perf_\dg(X)}$. Thanks to the commutativity of the diagram \eqref{eq:diagram-big} and to the fully-faithfulness of the functor $\Phi$, the endomorphism $\underline{\pi}_+$ corresponds to an endomorphism $\{\underline{\pi}_n^+\}_{n \in \bbZ}$ of the object $\iota(\mathfrak{h}(X)_\bbQ)$ in the orbit category $\Chow(k)_\bbQ/_{\!-\otimes \bbQ(1)}$. Moreover, since the composition \eqref{eq:composition} is naturally isomorphic to \eqref{eq:2-periodic}, the image of $\{\underline{\pi}_n^+\}_{n \in \bbZ}$ under the composition $TP_\pm(-)_{1/p} \circ \Phi$ agrees with the endomorphism $(\id,0)$ of the $\bbZ/2$-graded $K$-vector space $(\bigoplus_{i\,\mathrm{even}} H^i_{\text{crys}}(X), \bigoplus_{i\,\mathrm{odd}} H^i_{\text{crys}}(X))$. Note that the following morphism 
$$\underline{\pi}_n^+\colon \mathfrak{h}(X)_\bbQ \too \mathfrak{h}(X)_\bbQ(n)$$
in $\Chow(k)_\bbQ$, where $\mathfrak{h}(X)_\bbQ(n)$ stands for $\mathfrak{h}(X)_\bbQ \otimes \bbQ(1)^{\otimes n}$, induces an homomorphism of degree $-2n$ in crystalline cohomology theory:
$$H^\ast_{\text{crys}}(\underline{\pi}_n^+)\colon H^\ast_{\text{crys}}(X) \too H^{\ast - 2n}_{\text{crys}}(X)\,.$$ 
Since the image of $\{\underline{\pi}_n^+\}_{n \in \bbZ}$ under the composition $TP_\pm(-)_{1/p} \circ \Phi$ is given by $\sum_n H^\ast_{\text{crys}}(\underline{\pi}_n^+)$, this implies that all the homomorphisms $H^\ast_{\text{crys}}(\underline{\pi}_n^+)$, with $n \neq 0$, are necessarily equal to zero. Consequently, $\underline{\pi}_0^+$ is an endomorphism of the Chow motive $\mathfrak{h}(X)_\bbQ$ whose image under the functor \eqref{eq:2-periodic} agrees with the above endomorphism $(\id,0)$ of the $\bbZ/2$-graded $K$-vector space $(\bigoplus_{i\,\mathrm{even}} H^i_{\text{crys}}(X), \bigoplus_{i\,\mathrm{odd}} H^i_{\text{crys}}(X))$. By construction of the functor \eqref{eq:2-periodic}, we hence conclude finally that the image of $\underline{\pi}_0^+$ under the functor $H^\ast_{\text{crys}}\colon \Chow(k)_\bbQ \to \mathrm{vect}(K)$ agrees with the even K\"unneth projector $\pi^+_X:=\sum_i \pi_X^{2i}$. This proves the conjecture $\mathrm{C}^+(X)$.
\subsection*{Type $\mathrm{D}$}
Let $\Hom(k)_\bbQ$ be the classical category of homological motives (with respect to crystalline cohomology theory) and $\Num(k)_\bbQ$ the classical category of numerical motives. Recall from \cite[\S4.6]{book} that there exists a $\bbQ$-linear, fully-faithful, symmetric monoidal functor $\Phi_N$ making the following diagram commute: 
\begin{equation}\label{eq:comp1}
\xymatrix{
\Chow(k)_\bbQ \ar[d] \ar[r]^-\iota & \Chow(k)_\bbQ/_{\!-\otimes \bbQ(1)} \ar[d] \ar[r]^-\Phi & \NChow(k)_\bbQ \ar[d] \\
\Num(k)_\bbQ \ar[r]^-\iota & \Num(k)_\bbQ/_{\!-\otimes \bbQ(1)} \ar[r]^-{\Phi_N}& \NNum(k)_\bbQ\,.
}
\end{equation}
By construction, the kernel of crystalline cohomology $H^\ast_{\text{crys}}\colon \Chow(k)_\bbQ \to \mathrm{vect}(K)$ agrees with the kernel of the above symmetric monoidal functor \eqref{eq:2-periodic}. Therefore, since the composition \eqref{eq:2-periodic} is naturally isomorphic to \eqref{eq:composition} and \eqref{eq:N} is the largest $\otimes$-ideal of the categories $\Chow(k)_\bbQ$ and $\NChow(k)_\bbQ$, the preceding diagram \eqref{eq:comp1} admits the following ``factorization''
\begin{equation}\label{eq:comp2}
\xymatrix{
\Chow(k)_\bbQ \ar[d] \ar[r]^-\iota & \Chow(k)_\bbQ/_{\!-\otimes \bbQ(1)} \ar[d] \ar[r]^-\Phi & \NChow(k)_\bbQ \ar[d] \\
\Hom(k)_\bbQ \ar[d] \ar[r]^-\iota & \Hom(k)_\bbQ/_{\!-\otimes \bbQ(1)} \ar[d] \ar[r]^-{\Phi_H} & \NHom(k)_\bbQ \ar[d] \\
\Num(k)_\bbQ \ar[r]^-\iota & \Num(k)_\bbQ/_{\!-\otimes \bbQ(1)} \ar[r]^-{\Phi_N} & \NNum(k)_\bbQ\,,
}
\end{equation}
where $\Phi_H$ stands for the functor induced by the universal property of the orbit category $\Hom(k)_\bbQ/_{\!-\otimes \bbQ(1)}$.
\begin{lemma}\label{lem:aux}
The induced functor $\Phi_H$ is full.
\end{lemma}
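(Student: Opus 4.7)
The plan is to establish fullness of $\Phi_H$ by lifting morphisms from $\NHom(k)_\bbQ$ up to the orbit category on $\Chow(k)_\bbQ$ via the fully faithful functor $\Phi$, and then pushing them down to the orbit category on $\Hom(k)_\bbQ$ via the quotient functor $q_1\colon\Chow(k)_\bbQ\to\Hom(k)_\bbQ$. The key inputs are the fullness of $\Phi$ from \cite[Thm.~4.3]{book} and the commutativity of diagram \eqref{eq:comp2}.

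Fix objects $M,N$ of $\Hom(k)_\bbQ/_{\!-\otimes\bbQ(1)}$, which are the same as objects of $\Hom(k)_\bbQ$, and a morphism $\alpha\colon\Phi_H(\iota(M))\to\Phi_H(\iota(N))$ in $\NHom(k)_\bbQ$. Writing $M=(\bar M,e_M)$ and $N=(\bar N,e_N)$ as direct summands of Chow motives via idempotents $e_M,e_N$, I first treat the case $e_M=\mathrm{id}$ and $e_N=\mathrm{id}$, then deduce the general case by conjugation.

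In the reduced case, $\Phi_H(\iota(\bar M))$ equals $q_2(\Phi(\iota(\bar M)))$ by \eqref{eq:comp2}, where $q_2\colon\NChow(k)_\bbQ\to\NHom(k)_\bbQ$ is the quotient functor, and similarly for $\bar N$. Since $q_2$ is full (a quotient by an ideal of morphisms is always full), $\alpha$ admits a preimage $\tilde\alpha\colon\Phi(\iota(\bar M))\to\Phi(\iota(\bar N))$ in $\NChow(k)_\bbQ$. By the fullness of $\Phi$, the morphism $\tilde\alpha$ is the image of a finitely supported family $\{g_n\}_{n\in\bbZ}$ with $g_n\colon\bar M\to\bar N(n)$ in $\Chow(k)_\bbQ$, representing an element of the orbit category $\Chow(k)_\bbQ/_{\!-\otimes\bbQ(1)}$. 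Applying $q_1$ componentwise yields a morphism $\{q_1(g_n)\}_n\colon\iota(\bar M)\to\iota(\bar N)$ in $\Hom(k)_\bbQ/_{\!-\otimes\bbQ(1)}$ whose image under $\Phi_H$ equals $q_2(\tilde\alpha)=\alpha$, by commutativity of \eqref{eq:comp2}.

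For the general case, the same recipe produces a morphism lifting $\alpha$ viewed as a morphism between the ambient objects $\Phi_H(\iota(\bar M))$ and $\Phi_H(\iota(\bar N))$; conjugating componentwise by $e_M$ and $e_N$ then yields a morphism $\iota(M)\to\iota(N)$ in the orbit category whose image under $\Phi_H$ recovers $\alpha$, using the relation $\alpha=\Phi_H(\iota(e_N))\,\alpha\,\Phi_H(\iota(e_M))$. The main obstacle I anticipate is the bookkeeping of idempotents: one must verify that $\Phi_H$ is compatible with the idempotent splittings used to define $\Hom(k)_\bbQ$ and $\NHom(k)_\bbQ$ (which follows from additivity of $\Phi$) and that the conjugation step respects the orbit-category structure, both of which are routine once the fullness of $q_2$ is in hand.
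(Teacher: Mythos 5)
Your proof is correct, but it takes a somewhat different route from the paper's. The paper factors $\Phi_H$ as $\Phi'_H\circ\theta$, where $\theta\colon \Hom(k)_\bbQ/_{\!-\otimes\bbQ(1)}\to(\Chow(k)_\bbQ/_{\!-\otimes\bbQ(1)})/\mathrm{Ker}$ is the canonical comparison functor into the idempotent-completed quotient of the orbit category by the kernel of $TP_\pm(-)_{1/p}\circ\Phi$; fullness of $\theta$ is quoted from \cite[Lem.~4.7]{JEMS}, and $\Phi'_H$ inherits full faithfulness from $\Phi$. You instead perform the lift directly: pull $\alpha$ back along the full quotient functor $\NChow(k)_\bbQ\to\NHom(k)_\bbQ$, use the full faithfulness of $\Phi$ to realize the lift as a finitely supported family $\{g_n\}$ in the orbit category of $\Chow(k)_\bbQ$, and push down componentwise, concluding via the commutativity of \eqref{eq:comp2}. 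This diagram chase is essentially the mechanism underlying the fullness of $\theta$, so your argument inlines the cited lemma; what it buys is independence from the external reference, at the cost of handling the idempotent completion by hand (your conjugation step with $e_M$, $e_N$), which the paper's formulation absorbs into the auxiliary category. Your two supporting claims are both sound: the composite $\NChow(k)_\bbQ\to\NChow(k)_\bbQ/\mathrm{Ker}\hookrightarrow\NHom(k)_\bbQ$ is indeed full on the ``old'' objects relevant here, and $\Phi$ is fully faithful by \cite[Thm.~4.3]{book}. Both proofs are valid.
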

\begin{proof}
Let us write $(\Chow(k)_\bbQ/_{\!-\otimes \bbQ(1)})/\mathrm{Ker}$ for the idempotent completion of the quotient of the orbit category $\Chow(k)_\bbQ/_{\!-\otimes \bbQ(1)}$ by the kernel of the composition $TP_\pm(-)_{1/p} \circ \Phi$. Under this notation, we have the following commutative diagram
$$
\xymatrix{
\Chow(k)_\bbQ/_{\!-\otimes \bbQ(1)} \ar[d] \ar@{=}[r] & \Chow(k)_\bbQ/_{\!-\otimes \bbQ(1)} \ar[d] \ar[r]^-\Phi & \NChow(k)_\bbQ \ar[d] \\
\Hom(k)_\bbQ/_{\!-\otimes \bbQ(1)} \ar[r]^-\theta & (\Chow(k)_\bbQ/_{\!-\otimes \bbQ(1)})/\mathrm{Ker} \ar[r]^-{\Phi'_H}& \NHom(k)_\bbQ\,,
}
$$
where $\theta$, resp. $\Phi'_H$, stands for the canonical, resp. induced, functor. The proof follows now from the fact that the functor $\theta$, resp. $\Phi_H$, is full (see \cite[Lem.~4.7]{JEMS}), resp. fully-faithful, and that $\Phi_H=\Phi'_{H} \circ \theta$.
\end{proof}
Thanks to the commutative diagram \eqref{eq:diagram-big}, the bottom right-hand side square in \eqref{eq:comp2} yields  the following commutative square of $\bbQ$-vector spaces:
$$
\xymatrix{
\Hom_{\Hom(k)_\bbQ/_{\!-\otimes \bbQ(1)}}(\mathfrak{h}(k)_\bbQ, \mathfrak{h}(X)_\bbQ) \ar@{->>}[d] \ar@{->>}[r] & \Hom_{\NHom(k)_\bbQ}(U(k)_\bbQ, U(\perf_\dg(X))_\bbQ) \ar@{->>}[d]  \\
\Hom_{\Num(k)_\bbQ/_{\!-\otimes \bbQ(1)}}(\mathfrak{h}(k)_\bbQ, \mathfrak{h}(X)_\bbQ) \ar[r]^-\simeq & \Hom_{\NNum(k)_\bbQ}(U(k)_\bbQ, U(\perf_\dg(X))_\bbQ)\,.
}
$$
Note that thanks to Lemma \ref{lem:aux}, the upper horizontal homomorphism is surjective. Note also that by construction of the categories of (noncommutative) homological and numerical motives, the preceding commutative square identifies with 
\begin{equation}\label{eq:square1}
\xymatrix{
Z^\ast(X)_\bbQ/_{\!\sim \mathrm{hom}} \ar@{->>}[r] \ar@{->>}[d] & K_0(\perf_\dg(X))_\bbQ/_{\!\sim \mathrm{hom}} \ar@{->>}[d] \\
Z^\ast(X)_\bbQ/_{\!\sim \mathrm{num}} \ar[r]^-\simeq & K_0(\perf_\dg(X))_\bbQ/_{\!\sim \mathrm{num}}\,.
}
\end{equation}
We now have all the ingredients necessary to prove the equivalence of conjectures $\mathrm{D}(X) \Leftrightarrow \mathrm{D}_{\mathrm{nc}}(\perf_\dg(X))$. Assume first that the conjecture $\mathrm{D}(X)$ holds, \ie that the vertical left-hand side homomorphism in \eqref{eq:square1} is injective. From the commutativity of \eqref{eq:square1}, we conclude that the vertical right-hand side homomorphism in \eqref{eq:square1} is also injective, \ie that the conjecture $\mathrm{D}_{\mathrm{nc}}(\perf_\dg(X))$ also holds. 

Assume now that the conjecture $\mathrm{D}_{\mathrm{nc}}(\perf_\dg(X))$ holds, \ie that vertical right-hand side homomorphism in \eqref{eq:square1} is injective. Note that the vertical left-hand side homomorphism in \eqref{eq:square1} is the diagonal quotient homomorphism from the direct sum 
$\bigoplus_{i=0}^{\mathrm{dim}(X)} Z^i(X)_\bbQ/_{\!\sim \mathrm{hom}}$ to the direct sum $\bigoplus_{i=0}^{\mathrm{dim}(X)} Z^i(X)_\bbQ/_{\!\sim \mathrm{num}}$. Therefore, thanks to the commutativity of \eqref{eq:square1}, in order to prove the conjecture $\mathrm{D}(X)$ it suffices to show that the following homomorphisms are injective:
\begin{eqnarray}\label{eq:homomorphisms}
Z^i(X)_\bbQ/_{\!\sim \mathrm{hom}} \too K_0(\perf_\dg(X))_\bbQ/_{\!\sim \mathrm{hom}} && 0 \leq i \leq \mathrm{dim}(X)\,.
\end{eqnarray}
Note that the composed functor $\Phi_H \circ \iota$ in \eqref{eq:square1} is faithful. In particular, for every $0 \leq i \leq \mathrm{dim}(X)$, the induced homomorphism is injective:
\begin{equation}\label{eq:final}
\Hom_{\Hom(k)_\bbQ}(\mathfrak{h}(k)_\bbQ, \mathfrak{h}(X)_\bbQ(i)) \too \Hom_{\NHom(k)_\bbQ}(U(k)_\bbQ, U(\perf_\dg(X))_\bbQ)\,.
\end{equation}
By construction of the category of (noncommutative) homological motives and of the orbit category $\Hom(k)_\bbQ/_{\!-\otimes \bbQ(1)}$, the preceding homomorphisms \eqref{eq:final} (induced by the functor $\Phi_H \circ \iota$) correspond to the above homomorphisms \eqref{eq:homomorphisms} (induced by the functor $\Phi_H$). This implies that the homomorphisms \eqref{eq:homomorphisms} are injective, and hence proves the conjecture $\mathrm{D}(X)$.
\section{Proof of Theorem \ref{thm:HPD}}
Thanks to Theorem \ref{thm:main}, the proof of Theorem \ref{thm:HPD} is similar to the proof of \cite[Thm.~1.4]{CD}. Simply replace $\cO_X(r)$ by $\cL_X(r)$, $\perf(Y;\cF)$ by $\perf(Y)$, and the references \cite[Thm.~6.3]{Kuznetsov-IHES} and \cite[\S2.4]{Kuznetsov-ICM} (in characteristic zero) by the reference \cite[Thm.~2.3.4]{Bernardara} (in arbitrary characteristic).
\section{Proof of Theorem \ref{thm:orbifold}}
We start by proving the first claim. Since by assumption $p\nmid n$, \ie since $1/n \in k$, it follows from \cite[Thm.~1.1 and Rk.~1.4]{Orbifold} that the noncommutative Chow motive $U(\perf_\dg(\cX))_\bbQ$ is a direct summand of $\bigoplus_{\sigma \subseteq G} U(\perf_\dg(X^\sigma \times \mathrm{Spec}(k[\sigma])))_\bbQ$. By definition, the noncommutative standard conjectures of type $\mathrm{C}^+$ and $\mathrm{D}$ are stable under direct sums and direct summands. Therefore, we obtain the implications:
$$ \sum_{\sigma \subseteq G} \mathrm{C}^+_{\mathrm{nc}}(\perf_\dg(X^\sigma \times \mathrm{Spec}(k[\sigma]))) \Rightarrow \mathrm{C}^+_{\mathrm{nc}}(\perf_\dg(\cX))=: \mathrm{C}^+(\cX)$$  
$$ \sum_{\sigma \subseteq G} \mathrm{D}_{\mathrm{nc}}(\perf_\dg(X^\sigma \times \mathrm{Spec}(k[\sigma]))) \Rightarrow \mathrm{D}_{\mathrm{nc}}(\perf_\dg(\cX))=:\mathrm{D}(\cX)\,.$$
The proof is now a consequence of Theorem \ref{thm:main} and of the following implications
$$ \mathrm{C}^+(X^\sigma) \stackrel{\text{(a)}}{\Rightarrow} \mathrm{C}^+(X^\sigma) + \mathrm{C}^+(\mathrm{Spec}(k[\sigma])) \stackrel{\text{(b)}}{\Rightarrow} \mathrm{C}^+(X^\sigma \times \mathrm{Spec}(k[\sigma]))\,,$$
where (a) follows from the fact that $\mathrm{dim}(\mathrm{Spec}(k[\sigma]))=0$ and (b) from Remark~\ref{rk:Kunneth}. 
  
Let us now prove the second claim. If $k$ contains moreover the $n^{\mathrm{th}}$ roots of unity of $k$, then it follows from \cite[Cor.~1.6(i)]{Orbifold} that the noncommutative Chow motive $U(\perf_\dg(\cX))_\bbQ$ is a direct summand of $\bigoplus_{\sigma \subseteq G} U(\perf_\dg(X^\sigma))_\bbQ$. Therefore, since the noncommutative standard conjecture of type $\mathrm{D}$ is stable under direct sums and direct summands, the proof is a consequence of the following implications
$$ \sum_{\sigma \subseteq G} \mathrm{D}(X^\sigma) \stackrel{\text{(a)}}{\Rightarrow} \sum_{\sigma \subseteq G} \mathrm{D}_{\mathrm{nc}}(\perf_\dg(X^\sigma)) \Rightarrow \mathrm{D}_{\mathrm{nc}}(\perf_\dg(\cX))=:\mathrm{D}(\cX)\,,$$
where (a) follows from Theorem \ref{thm:main}. 
\section{Application III: Zeta functions of endomorphisms}\label{sec:zeta}
Let $\cA$ be a smooth proper dg category and $f$ an endomorphism of the noncommutative Chow motive $U(\cA)_\bbQ$; see \S\ref{sec:NC}. Following \cite[\S5]{Positive}, the {\em zeta function of $f$} is defined as the following formal power series 
$$Z(f;t):=\mathrm{exp}\left(\sum_{n \geq 1} \mathrm{tr}(f^{\circ n}) \frac{t^n}{n}\right) \in \bbQ\llbracket t \rrbracket\,,$$ 
where $f^{\circ n}$ stands for the composition of $f$ with itself $n$-times, $\mathrm{tr}(f^{\circ n})$ stands for the categorical trace of $f^{\circ n}$, and $\mathrm{exp}(t):=\sum_{m \geq 0} \frac{t^m}{m!} \in \bbQ\llbracket t\rrbracket$. Recall from \cite[Rk.~5.2]{Positive} that when $f=[\mathrm{B}]_\bbQ$ with $\mathrm{B} \in \cD_c(\cA^\op \otimes \cA)$, we have the computation
\begin{equation}\label{eq:integers}
\mathrm{tr}(f^{\circ n}) =[HH(\cA; \underbrace{\mathrm{B}\otimes^{\bf L}_\cA \cdots \otimes^{\bf L}_\cA \mathrm{B}}_{n\text{-}\text{times}})] \in K_0(k) \simeq \bbZ\,,
\end{equation}
where $HH(\cA; \mathrm{B}\otimes^{\bf L}_\cA \cdots \otimes^{\bf L}_\cA \mathrm{B})$ stands for the Hochschild homology of $\cA$ with coefficients in the dg $\cA\text{-}\cA$ bimodule $\mathrm{B}\otimes^{\bf L}_\cA \cdots \otimes^{\bf L}_\cA \mathrm{B}$.
\begin{example}[Classical zeta function]\label{ex:Frobenius}
Let $k=\bbF_q$ be a finite field of characteristic $p$, $X$ a smooth projective $k$-scheme, and $\mathrm{Fr}$ the Frobenius of $X$. Recall from \cite[Example~5.4]{Positive} that when $\cA= \perf_\dg(X)$ and $\mathrm{B}$ is the dg bimodule associated to the pull-back dg functor $\mathrm{Fr}^\ast\colon \perf_\dg(X) \to \perf_\dg(X)$, the above integer \eqref{eq:integers} agrees with $|X(\bbF_{q^n})|$. Consequently, in this particular case, the zeta function of $f=[\mathrm{B}]_\bbQ$ reduces to the classical zeta function $Z_X(t):= \mathrm{exp}(\sum_{n \geq 1} |X(\bbF_{q^n})| \frac{t^n}{n})$ of $X$.
\end{example}
As proved in \cite[Thm.~5.8]{Positive}, the formal power series $Z(f;t)$ is rational and satisfies a functional equation. In the particular case of Example \ref{ex:Frobenius}, these results yield an alternative proof of ``half'' of the Weil conjectures; consult \cite[Cor.~5.12]{Positive} for details. In {\em loc. cit.}, we established moreover the following equality:
\begin{equation}\label{eq:zeta2}
 Z(f;t) = \frac{\mathrm{det}(\id - t \,TP_-(f)_{1/p}\,|\, TP_-(\cA)_{1/p})}{\mathrm{det}(\id - t\, TP_+(f)_{1/p}\,|\, TP_+(\cA)_{1/p})} \in K(t)\,.
\end{equation} 
\begin{theorem}\label{thm:zeta}
If the conjecture $\mathrm{C}^+_{\mathrm{nc}}(\cA)$ holds, then the numerator and denominator of \eqref{eq:zeta2} are polynomials with $\bbQ$-coefficients. Moreover, when $f=[\mathrm{B}]_\bbQ$ with $\mathrm{B} \in \cD_c(\cA^\op \otimes \cA)$, the same holds with $\bbZ$-coefficients.
\end{theorem}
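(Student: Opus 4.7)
The plan is to prove the two claims in turn, both resting on the fact that the symmetric monoidal functor $TP_\pm(-)_{1/p}\colon \NChow(k)_\bbQ \to \mathrm{vect}_{\bbZ/2}(K)$ preserves categorical (super)traces.

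For the rational-coefficients claim, I would exploit the algebraic even K\"unneth projector $\underline{\pi}_+^\cA$ provided by $\mathrm{C}^+_{\mathrm{nc}}(\cA)$ to isolate the action of $f$ on the even and odd parts of $TP_\pm(\cA)_{1/p}$. Since $TP_\pm(f)_{1/p}$ preserves the $\bbZ/2$-grading, it commutes with $\pi_+^\cA = TP_\pm(\underline{\pi}_+^\cA)_{1/p}$, and symmetric monoidality of $TP_\pm(-)_{1/p}$ yields
$$
\mathrm{tr}\bigl(TP_+(f)_{1/p}^{\circ n}\,\big|\,TP_+(\cA)_{1/p}\bigr) \;=\; \mathrm{tr}_{\mathrm{cat}}\bigl(\underline{\pi}_+^\cA \circ f^{\circ n}\bigr)\;\in\; \mathrm{End}_{\NChow(k)_\bbQ}(U(k)_\bbQ)\;=\;\bbQ
$$
for every $n\geq 1$, with the analogous statement for the odd part obtained by replacing $\underline{\pi}_+^\cA$ by $\id - \underline{\pi}_+^\cA$. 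Newton's identities then express the coefficients of the reverse characteristic polynomials $P_\pm(t) := \det(\id - t\,TP_\pm(f)_{1/p}\,|\,TP_\pm(\cA)_{1/p})$ as rational-polynomial expressions in these power-sum traces, giving $P_\pm(t)\in\bbQ[t]$ --- which are precisely the numerator and denominator of \eqref{eq:zeta2}.

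For the integer-coefficients claim, I would combine \eqref{eq:integers} with a Fatou-type rationality argument. Since $f = [\mathrm{B}]_\bbQ$ with $\mathrm{B}$ an honest bimodule, \eqref{eq:integers} gives $\mathrm{tr}_{\mathrm{cat}}(f^{\circ n}) \in K_0(k) \simeq \bbZ$ for every $n\geq 1$; unpacking the definition of $Z(f;t)$ and invoking the classical Witt-vector identity $\exp\!\bigl(\sum_{n\geq 1} a_n t^n/n\bigr) \in 1+t\bbZ[[t]]$ (valid whenever $a_n\in\bbZ$) then produces $Z(f;t) \in 1 + t\bbZ[[t]]$. Combined with the rationality \eqref{eq:zeta2} and with the previous paragraph, a Fatou-type lemma on rational functions in $\bbZ[[t]]\cap \bbQ(t)$ with constant term $1$ yields a coprime representation $Z(f;t) = A(t)/B(t)$ with $A,B\in 1 + t\bbZ[t]$. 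The hard step --- and the main obstacle --- is the final identification: \emph{a priori} the pair $(P_-,P_+)$ of the previous paragraph could differ from $(A,B)$ by a common factor $R(t)\in\bbQ[t]\setminus\bbZ[t]$, yielding identical quotients but non-integer polynomials. I expect to resolve this by showing that $P_-$ and $P_+$ are coprime --- equivalently, that the spectra of $TP_+(f)_{1/p}$ and $TP_-(f)_{1/p}$ are disjoint --- for example by leveraging a $W(k)$-integral lattice on $TP(\cA)$ preserved by $TP(f)$ (available since $f$ comes from an honest bimodule), which constrains the eigenvalues of $TP_\pm(f)_{1/p}$ and, together with the previous paragraph, pins down $P_\pm(t)$ as a polynomial with $\bbZ$-coefficients.
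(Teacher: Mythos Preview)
Your treatment of the rational-coefficients claim is essentially the paper's: use the algebraic projector to obtain $\mathrm{tr}(\underline{\pi}_+^\cA \circ f^{\circ n}) \in \bbQ$, then invoke Newton's identities.

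For the integer-coefficients claim, however, there are two genuine gaps.

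First, the ``Witt-vector identity'' you invoke is false as stated: $a_n \in \bbZ$ for all $n$ does \emph{not} force $\exp\bigl(\sum_{n\geq 1} a_n t^n/n\bigr) \in 1 + t\,\bbZ[[t]]$. Take $a_1 = 1$ and $a_n = 0$ for $n \geq 2$; the resulting series is $\exp(t)$, whose $t^2$-coefficient is $1/2$. Integrality of such an exponential requires the $a_n$ to satisfy Dwork-type congruences, not merely to be integers.

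Second --- and more seriously --- the coprimality you hope for simply fails. If $f = \id_{U(\cA)_\bbQ}$ (which is of the form $[\mathrm{B}]_\bbQ$), then $P_+(t) = (1-t)^{d_+}$ and $P_-(t) = (1-t)^{d_-}$ with $d_\pm = \dim_K TP_\pm(\cA)_{1/p}$; these share the factor $(1-t)^{\min(d_+,d_-)}$. More generally there is no mechanism preventing $TP_+(f)_{1/p}$ and $TP_-(f)_{1/p}$ from sharing eigenvalues, so your proposed route through a coprime Fatou representation cannot recover $P_\pm$ individually. The $W(k)$-lattice you sketch may constrain the eigenvalues to be $p$-integral, but it says nothing about disjointness of the even and odd spectra.

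The paper bypasses both issues by working directly with the power sums of the eigenvalues $\alpha_1,\ldots,\alpha_r$ of $TP_+(f)_{1/p}$ alone. Writing $f_+ := \underline{\pi}_+^\cA \circ f$, one has $\alpha_1^n + \cdots + \alpha_r^n = \mathrm{tr}(f_+^{\circ n})$ for every $n\geq 1$. The projector $\underline{\pi}_+^\cA$ need not be integral, but some positive integer multiple is: $\lambda \cdot \underline{\pi}_+^\cA = [\mathrm{B}']_\bbQ$ for some $\mathrm{B}' \in \cD_c(\cA^\op\otimes\cA)$. Hence $\lambda \cdot \mathrm{tr}(f_+^{\circ n}) = \mathrm{tr}\bigl([\mathrm{B}'] \circ [\mathrm{B}]^{\circ n}\bigr) \in \bbZ$ for all $n$. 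A classical lemma of Kleiman \cite[2.8~Lemma]{Kleim} then shows that the elementary symmetric functions of the $\alpha_i$ --- i.e.\ the coefficients of $P_+$ --- are algebraic integers; combined with the rationality already established in the first claim, they lie in $\bbZ$. The argument for $P_-$ is identical with $\underline{\pi}_-^\cA := \id - \underline{\pi}_+^\cA$ in place of $\underline{\pi}_+^\cA$.
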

Note that thanks to Corollary \ref{cor:examples}, resp. Proposition \ref{prop:fin}, the preceding Theorem \ref{thm:zeta} can be applied, for example, to any ``low-dimensional'' orbifold, resp. to any finite dimensional $k$-algebra of finite global dimension.
\begin{corollary}\label{cor:zeta}
Let $X$ be a smooth projective $\bbF_q$-scheme.
\begin{itemize}
\item[(i)] We have the following equality:
\begin{equation}\label{eq:zeta3}
 Z_X(t) = \frac{\prod_{i \,\mathrm{odd}}\mathrm{det}(\id - t \,H^i_{\mathrm{crys}}(\mathrm{Fr})\,|\, H^i_{\mathrm{crys}}(X))}{\prod_{i \,\mathrm{even}}\mathrm{det}(\id - t \,H^i_{\mathrm{crys}}(\mathrm{Fr})\,|\, H^i_{\mathrm{crys}}(X))} \in K(t)\,.
\end{equation} 
\item[(ii)] If the conjecture $\mathrm{C}^+(X)$ holds, then the numerator and denominator of \eqref{eq:zeta3} are polynomials with $\bbZ$-coefficients.
\end{itemize}
\end{corollary}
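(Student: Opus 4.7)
The plan is to combine Example \ref{ex:Frobenius}, the rationality formula \eqref{eq:zeta2}, Scholze's identification (Theorem \ref{thm:TP}), together with Theorems \ref{thm:main} and \ref{thm:zeta}. Set $\cA := \perf_\dg(X)$, let $\mathrm{B}$ be the dg $\cA$-$\cA$-bimodule associated to the pull-back dg functor $\mathrm{Fr}^\ast\colon \perf_\dg(X) \to \perf_\dg(X)$, and put $f := [\mathrm{B}]_\bbQ \in \mathrm{End}_{\NChow(k)_\bbQ}(U(\cA)_\bbQ)$. By Example \ref{ex:Frobenius}, the classical zeta function $Z_X(t)$ coincides with $Z(f;t)$; hence the formula \eqref{eq:zeta2} immediately yields
$$
Z_X(t) = \frac{\mathrm{det}(\id - t\, TP_-(f)_{1/p}\,|\, TP_-(\cA)_{1/p})}{\mathrm{det}(\id - t\, TP_+(f)_{1/p}\,|\, TP_+(\cA)_{1/p})} \in K(t).
$$

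For part (i), I would invoke the naturality (in $X$) of Scholze's identification \eqref{eq:Scholze}. Applied to $\mathrm{Fr}$, this naturality translates $TP_+(f)_{1/p}$, resp. $TP_-(f)_{1/p}$, into $\bigoplus_{i\,\mathrm{even}} H^i_{\mathrm{crys}}(\mathrm{Fr})$, resp. $\bigoplus_{i\,\mathrm{odd}} H^i_{\mathrm{crys}}(\mathrm{Fr})$, acting on the $\bbZ/2$-graded $K$-vector space $TP_\pm(\cA)_{1/p}$. Since the Frobenius respects the cohomological $\bbZ$-grading, each $H^i_{\mathrm{crys}}(X)$ is an $H^\ast_{\mathrm{crys}}(\mathrm{Fr})$-invariant subspace, and the characteristic polynomial on each parity component factors as a product of the characteristic polynomials on the individual degrees. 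Substituting back into the above rational expression yields precisely the equality \eqref{eq:zeta3}.

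For part (ii), assume $\mathrm{C}^+(X)$ holds. Theorem \ref{thm:main} then gives $\mathrm{C}^+_{\mathrm{nc}}(\cA)$. Since $f = [\mathrm{B}]_\bbQ$ with $\mathrm{B} \in \cD_c(\cA^\op \otimes \cA)$, the integrality statement of Theorem \ref{thm:zeta} ensures that the numerator and denominator of \eqref{eq:zeta2} are polynomials with $\bbZ$-coefficients. Under the identification established in part (i), these are exactly the numerator and denominator of \eqref{eq:zeta3}, so the claim follows.

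The only delicate point is the naturality assertion used in part (i): one needs Scholze's isomorphism \eqref{eq:Scholze} to be compatible with the Frobenius endomorphism, i.e., that $TP_\pm(\mathrm{Fr}^\ast)_{1/p}$ corresponds to $H^\ast_{\mathrm{crys}}(\mathrm{Fr})$ under \eqref{eq:Scholze}. This should be immediate from the proof of Theorem \ref{thm:TP} since every step (the motivic filtration of Bhatt-Morrow-Scholze, the Adams-operation eigenspace decomposition, and the identification of the graded pieces with crystalline cohomology) is functorial in the smooth proper dg category $\perf_\dg(X)$, hence in particular natural with respect to $\mathrm{Fr}^\ast$. Beyond verifying this functoriality, no further obstacle appears.
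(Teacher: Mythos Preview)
Your proposal is correct and follows essentially the same route as the paper's proof: for (i) you combine \eqref{eq:zeta2}, Example~\ref{ex:Frobenius}, and Theorem~\ref{thm:TP}, and for (ii) you combine Theorems~\ref{thm:main} and~\ref{thm:zeta} with Example~\ref{ex:Frobenius}. Your explicit discussion of the naturality of \eqref{eq:Scholze} with respect to $\mathrm{Fr}^\ast$ is a useful elaboration; the paper simply relies on the word ``natural'' in the statement of Theorem~\ref{thm:TP}.
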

\begin{proof}
On the one hand, item (i) follows from the combination of \eqref{eq:zeta2} with Example \ref{ex:Frobenius} and Theorem \ref{thm:TP}. On the other hand, item (ii) follows from the combination of Theorems \ref{thm:main}, \ref{thm:TP}, and \ref{thm:zeta}, with Example \ref{ex:Frobenius}.
\end{proof}
On the one hand, item (i) is Berthelot's cohomological interpretation of the classical zeta function in terms of crystalline cohomology theory; see \cite[page 583]{Berthelot}. On the other hand, item (ii) is Grothendieck's conditional approach to ``half''\footnote{The other ``half'' of the Riemann hypothesis asserts that the roots of the characteristic polynomial $\mathrm{det}(\id - t \,H^i_{\mathrm{crys}}(\mathrm{Fr})\,|\, H^i_{\mathrm{crys}}(X))$ have absolute value $q^{i/2}$.} of the Riemann hypothesis; see \cite[\S1-2]{Grothendieck} and \cite[4.1 Theorem]{Kleim}. Corollary \ref{cor:zeta} provides us with an alternative proof of these important results. Moreover, the above equality \eqref{eq:zeta2}, resp. Theorem \ref{thm:zeta}, establishes a far-reaching noncommutative generalization of Berthelot's cohomological interpretation of the classical zeta function, resp. of Grothendieck's conditional approach to ``half'' of the Riemann hypothesis.

\subsection*{Proof of Theorem \ref{thm:zeta}}
If the conjecture $\mathrm{C}^+_{\mathrm{nc}}(\cA)$ holds, then there exists an endomorphism $\underline{\pi}^\cA_+$ of $U(\cA)_\bbQ$ such that $TP_\pm(\underline{\pi}^\cA_+)=\pi^\cA_+$. In what follows, we write $f_+$ for the composition $\underline{\pi}^\cA_+\circ f$. Note that $TP_\pm(f_+)_{1/p}=TP_+(f)_{1/p}$.

We start by proving the first claim. Thanks to the classical Newton identities, the coefficients of the characteristic polynomial $\mathrm{det}(\id - t\, TP_+(f)_{1/p}\,|\, TP_+(\cA)_{1/p})$ can be written as polynomials with $\bbQ$-coefficients in the power sums $\alpha_1^n + \cdots + \alpha_r^n$, $n \geq 1$, where $\alpha_1, \dots, \alpha_r$ are the eigenvalues (with multiplicities) of the $K$-linear homomorphism $TP_+(f)_{1/p}$. Therefore, it suffices to show that these power sums are rational numbers. This follows from the following equalities 
\begin{equation}\label{eq:equalities}
\alpha_1^n + \cdots + \alpha_r^n = \mathrm{tr}(TP_+(f^{\circ n})_{1/p}) = \mathrm{tr}(TP_+(f_+^{\circ n})_{1/p})= \mathrm{tr}(f^{\circ n}_+)
\end{equation}
and from the fact that $\mathrm{tr}(f^{\circ n}_+) \in \bbQ$.

Let us now prove the second claim. By construction, the category of noncommutative Chow motives with $\bbZ$-coefficients $\NChow(k)_\bbZ$ (see \cite[Rk.~4.2]{book}) is additive, rigid symmetric monoidal, and comes equipped with a symmetric monoidal functor $U(-)_\bbZ\colon \dgcat_{\mathrm{sp}}(k) \to \NChow(k)_\bbZ$ as well as with a $\bbQ$-linearization symmetric monoidal functor $(-)_\bbQ\colon \NChow(k)_\bbZ \to \NChow(k)_\bbQ$. Therefore, if $f=[\mathrm{B}]_\bbQ$ with $\mathrm{B} \in \cD_c(\cA^\op \otimes \cA)$, \ie if $f$ is the $\bbQ$-linearization of an endomorphism $[\mathrm{B}]$ of $U(\cA)_\bbZ$, then $\mathrm{tr}(f^{\circ n})=\mathrm{tr}([\mathrm{B}]^{\circ n})\in \bbZ$ for every $n \geq 1$. The endomorphism $\underline{\pi}^\cA_+$ of $U(\cA)_\bbQ$ is not necessarily the $\bbQ$-linearization of an endomorphism of $U(\cA)_\bbZ$. Nevertheless, by removing denominators, there exists an integer $\lambda>0$ such that $\lambda\cdot \underline{\pi}^\cA_+=[\mathrm{B}']_\bbQ$ for some dg $\cA\text{-}\cA$ bimodule $\mathrm{B}'\in \cD_c(\cA^\op \otimes \cA)$. Consequently, making use of the following equalities
$$ \lambda \cdot \mathrm{tr}(f^{\circ n}_+)= \lambda \cdot \mathrm{tr}(\underline{\pi}^\cA_+\circ f^{\circ n})= \mathrm{tr}((\lambda \cdot \underline{\pi}^\cA_+)\circ f^{\circ n}) = \mathrm{tr}([\mathrm{B}']\circ [\mathrm{B}]^{\circ n})\,,$$
we conclude that $\lambda\cdot \mathrm{tr}(f^{\circ n}_+)\in \bbZ$ for every $n \geq 1$. Thanks to the above equalities \eqref{eq:equalities}, \cite[2.8 Lemma]{Kleim} hence implies that the coefficients of the characteristic polynomial $\mathrm{det}(\id - t\, TP_+(f)_{1/p}\,|\, TP_+(\cA)_{1/p})$ are algebraic integers. Since these numbers are also rational, we conclude that they are necessarily integers.

Finally, note that the proof concerning the coefficients of the characteristic polynomial $\mathrm{det}(\id - t\, TP_-(f)_{1/p}\,|\, TP_-(\cA)_{1/p})$ is similar: simply replace $\underline{\pi}^\cA_+$ by $\underline{\pi}^\cA_-$.

\medbreak\noindent\textbf{Acknowledgments:} After the release of \cite{CD}, Bruno Kahn (motivated by the fact that the standard conjecture of Hodge type is wide open in positive characteristic; see Remark \ref{rk:zero}) asked me if similar results would hold in positive characteristic. In this article, making use of topological periodic cyclic homology, I answer affirmatively to Kahn's question. I thank him for this motivating question. I am also very grateful to Lars Hesselholt for useful discussions concerning topological periodic cyclic homology and to Peter Scholze for explaining me the proof of Theorem \ref{thm:TP}.


\begin{thebibliography}{00}


\bibitem{AMN} B.~Antieau, A.~Mathew and T.~Nikolaus, {\em On the Blumberg-Mandell K\"unneth theorem for $TP$}. Available at arXiv:1710.05658.

\bibitem{Bernardara} A.~Auel, M.~Bernardara and M.~Bolognesi, {\em Fibrations in complete intersections of quadrics, Clifford algebras, derived categories, and rationality problems}. J.~Math.~Pures Appl. (9)~{\bf 102} (2014), no.~1, 249--291.

\bibitem{BBF} M.~Bernardara, M.~Bolognesi and D.~Faenzi, {\em Homological projective duality for determinantal varieties}. Adv. Math. {\bf 296} (2016), 181--209.

\bibitem{Berthelot} P.~Berthelot, {\em Cohomologie cristalline des sch\'emas de caract\'eristique $p>0$}. Lecture Notes in Math. {\bf 407}, Springer-Verlag, New York, 1974.

\bibitem{BMS2} B.~Bhatt, M.~ Morrow and P.~Scholze, {\em Topological Hochschild homology and integral $p$-adic Hodge theory}. Available at arXiv:1802.03261.

\bibitem{BM} A.~Blumberg and M.~Mandell, {\em The strong K\"unneth theorem for topological periodic cyclic homology}. Available at arXiv:1706.06846.

\bibitem{BLV} R.~Buchweitz, G.~Leuschke and M.~Van den Bergh, {\em On  the derived category of Grassmannians in arbitrary characteristic}. Comp. Math. {\bf 151} (2015), no.~7, 1242--1264.

\bibitem{Faenzi} A.~Conca and D.~Faenzi {\em A remark on hyperplane sections of rational normal scrolls}. Available at arXiv:1709.08332.

\bibitem{Grothendieck} A.~Grothendieck, {\em Standard conjectures on algebraic cycles}. 1969 Algebraic Geometry (Internat. Colloq., Tata Inst. Fund. Res., Bombay, 1968) pp. 193--199 Oxford Univ. Press, London.

\bibitem{Harris} J.~Harris, {\em Algebraic geometry.  A first course}. Graduate Texts in Mathematics, {\bf 133}. Springer-Verlag, New York, 1992.

\bibitem{Hesselholt} L.~Hesselholt, {\em Topological periodic cyclic homology and the Hasse-Weil zeta function}. Available at arXiv:1602.01980.

\bibitem{KM} N.~Katz and W.~Messing, {\em Some consequences of the Riemann hypothesis for varieties over finite fields}. 
Invent. Math. {\bf 23} (1974), 73--77. 

\bibitem{ICM-Keller} B.~Keller, {\em On differential graded categories}. International Congress of Mathematicians (Madrid), Vol.~II,  151--190. Eur.~Math.~Soc., Z{\"u}rich (2006).

\bibitem{Kleim1} S.~L.~Kleiman, {\em The standard conjectures}. Motives (Seattle, WA, 1991), 3--20, 
Proc. Sympos. Pure Math., {\bf 55}, Part 1, Amer. Math. Soc., Providence, RI, 1994. 

\bibitem{Kleim} \bysame, {\em Algebraic cycles and the Weil conjectures}. Dix expos\'es sur la cohomologie des sch\'emas, 359--386, Adv. Stud. Pure Math., {\bf 3}, North-Holland, Amsterdam, 1968. 

\bibitem{Miami} M.~Kontsevich, {\em Mixed noncommutative motives}. Talk at the Workshop on Homological Mirror Symmetry,  Miami, 2010. Notes available at \url{www-math.mit.edu/auroux/frg/miami10-notes}.  
\bibitem{finMot} \bysame, {\em Notes on motives in finite characteristic}.  Algebra, arithmetic, and geometry: in honor of Yu. I. Manin. Vol. II,  213--247, Progr. Math., {\bf 270}, BirkhŠuser Boston, MA, 2009. 
\bibitem{IAS} \bysame, {\em Noncommutative motives}. Talk at the IAS on the occasion of the $61^{\mathrm{st}}$ birthday of Pierre Deligne (2005). Available at \url{http://video.ias.edu/Geometry-and-Arithmetic}.    

\bibitem{Kuznetsov-ICM} A.~Kuznetsov, {\em Semiorthogonal decompositions in algebraic geometry}. Available at 1404.3143. To appear in Proceedings of the ICM 2014.

\bibitem{Kuznetsov-IHES} \bysame, {\em Homological projective duality}. Publ. Math. IH\'ES (2007), no. {\bf 105}, 157--220.

\bibitem{LO} V.~Lunts and D.~Orlov, {\em Uniqueness of enhancement for triangulated categories}. J. Amer. Math. Soc. {\bf 23} (2010), no.~3, 853--908.

\bibitem{Manin} Y.~I.~Manin, {\em Correspondences, motifs and monoidal transformations}. Mat. Sb. (N.S.) {\bf 77} (119) (1968), 475--507.

\bibitem{JEMS} M.~Marcolli and G.~Tabuada, {\em Noncommutative numerical motives, Tannakian structures, and motivic Galois groups}. Journal of the EMS {\bf 18} (2016), 623--655.

\bibitem{Scholze} P.~Scholze, Private communication at the Arbeitsgemeinschaft {\em Topological Cyclic Homology} in Oberwolfach, April 2018.

\bibitem{CD} G.~Tabuada, {\em A note on Grothendieck's standard conjectures of type $\mathrm{C}^+$ and $\mathrm{D}$}. Proceedings of the American Mathematical Society {\bf 146} (2018), no. 4, 1389--1399.

\bibitem{book} \bysame, {\em Noncommutative Motives}. With a preface by Yuri I. Manin. University Lecture
Series, {\bf 63}. American Mathematical Society, Providence, RI, 2015.

\bibitem{Tate} \bysame, {\em $HPD$-invariance of the Tate conjecture, Beilinson and Parshin conjectures}. Available at arXiv:1707.06639.

\bibitem{Positive} \bysame, {\em Noncommutative motives in positive characteristic and their applications}. Available at arXiv:1707.04248.

\bibitem{survey} \bysame, {\em Recent developments on noncommutative motives}. Available at arXiv:1611.05439. To appear in Contemporary Mathematics, AMS.

\bibitem{Gysin} G.~Tabuada and M.~Van den Bergh, {\em The Gysin triangle via localization and $\mathbb{A}^1$-homotopy invariance}. Transactions of the American Mathematical Society {\bf 370} (2018), no.~1, 421--446.

\bibitem{Azumaya} \bysame, {\em Noncommutative motives of Azumaya algebras}. J. Inst. Math. Jussieu {\bf 14} (2015), no. 2, 379--403.

\bibitem{Orbifold} \bysame, {\em Additive invariants of orbifolds}. Available at arXiv:1612.03162. To appear in Geometry and Topology.

\bibitem{Thomas} R.~Thomas, {\em Notes on HPD}. Available at arXiv:1512.08985. To appear in Proceedings of the 2015 AMS Summer Institute, Salt Lake City.

\end{thebibliography}
\end{document}

\end{proof}